\providecommand{\U}[1]{\protect\rule{.1in}{.1in}}
\newtheorem{theorem}{Theorem}
\newtheorem{corollary}[theorem]{Corollary}
\newtheorem{definition}[theorem]{Definition}
\newtheorem{lemma}[theorem]{Lemma}
\newtheorem{proposition}[theorem]{Proposition}
\newtheorem{remark}[theorem]{Remark}
\newenvironment{proof}[1][Proof]{\noindent\textbf{#1.} }{\ \rule{0.5em}{0.5em}}
\begin{document}

\title{On a class of $q$-Bernoulli, $q$-Euler and $q$-Genocchi polynomials}
\author{N. I. Mahmudov, M. Momenzadeh\\Eastern Mediterranean University\\Gazimagusa, TRNC, Mersiin 10, Turkey \\Email: nazim.mahmudov@emu.edu.tr\\$\ \ \ \ \ \ \ \ \ $mohammed.momenzadeh@emu.edu.tr}
\maketitle

\begin{abstract}
The main purpose of this paper is to introduce and investigate a class of
$q$-Bernoulli, $q$-Euler and $q$-Genocchi polynomials. The $q$-analogues of
well-known formulas are derived. The $q$-analogue of the
Srivastava--Pint\'{e}r addition theorem is obtained. Some new identities
involving $q$-polynomials are proved.

\end{abstract}

\section{ Introduction}

Throughout this paper, we always make use of the classical definition of
quantum concepts as follows:

The $q$-shifted factorial is defined by%
\begin{align*}
\left(  a;q\right)  _{0} &  =1,\ \ \ \left(  a;q\right)  _{n}=%
{\displaystyle\prod\limits_{j=0}^{n-1}}
\left(  1-q^{j}a\right)  ,\ \ \ n\in\mathbb{N},\\
\left(  a;q\right)  _{\infty} &  =%
{\displaystyle\prod\limits_{j=0}^{\infty}}
\left(  1-q^{j}a\right)  ,\ \ \ \ \left\vert q\right\vert <1,\ \ a\in
\mathbb{C}.
\end{align*}
It is known that%
\[
\left(  a;q\right)  _{n}=\sum_{k=0}^{n}\left[
\begin{array}
[c]{c}%
n\\
k
\end{array}
\right]  _{q}q^{\frac{1}{2}k\left(  k-1\right)  }\left(  -1\right)  ^{k}a^{k}.
\]
The $q$-numbers and $q$-numbers factorial and their improved forms are defined
by%
\begin{align*}
\left[  a\right]  _{q} &  =\frac{1-q^{a}}{1-q},\ \ \ \left(  q\neq
1,\ a\in\mathbb{C}\right)  ;\ \ \ \\
\ \ \left[  0\right]  _{q}! &  =1,\left[  n\right]  _{q}!=\left[  n\right]
_{q}\left[  n-1\right]  _{q}!\ \ \ \ \ ,\ .
\end{align*}
The $q$-polynomail coefficient and improved type of them are defined by%
\[
\left[
\begin{array}
[c]{c}%
n\\
k
\end{array}
\right]  _{q}=\frac{\left(  q;q\right)  _{n}}{\left(  q;q\right)
_{n-k}\left(  q;q\right)  _{k}},\ \ \ \ \ (k\leqslant n,k,n\in%
\mathbb{N}
)
\]
In the standard approach to the $q$-calculus two exponential function are
used, these $q$-exponential and improved type (see \cite{cel}) of it are
defined as follows:%

\begin{align*}
e_{q}\left(  z\right)   &  =\sum_{n=0}^{\infty}\frac{z^{n}}{\left[  n\right]
_{q}!}=\prod_{k=0}^{\infty}\frac{1}{\left(  1-\left(  1-q\right)
q^{k}z\right)  },\ \ \ 0<\left\vert q\right\vert <1,\ \left\vert z\right\vert
<\frac{1}{\left\vert 1-q\right\vert },\ \ \ \ \ \ \ \\
E_{q}(z)  &  =e_{1/q}\left(  z\right)  =\sum_{n=0}^{\infty}\frac{q^{\frac
{1}{2}n\left(  n-1\right)  }z^{n}}{\left[  n\right]  _{q}!}=\prod
_{k=0}^{\infty}\left(  1+\left(  1-q\right)  q^{k}z\right)
,\ \ \ \ \ \ \ 0<\left\vert q\right\vert <1,\ z\in\mathbb{C},\\
\mathcal{E}_{q}\left(  z\right)   &  =e_{q}\left(  \frac{z}{2}\right)
E_{q}\left(  \frac{z}{2}\right)  =%
{\displaystyle\sum\limits_{n=0}^{\infty}}
\frac{(-1,q)_{n}}{2^{n}}\frac{z^{n}}{\left[  n\right]  _{q}!}=%
{\displaystyle\sum\limits_{n=0}^{\infty}}
\frac{z^{n}}{\left\{  n\right\}  _{q}!}\\
&  =\prod_{k=0}^{\infty}\frac{\left(  1+\left(  1-q\right)  q^{k}\frac{z}%
{2}\right)  }{\left(  1-\left(  1-q\right)  q^{k}\frac{z}{2}\right)
},0<q<1,\left\vert z\right\vert <\frac{2}{1-q}.
\end{align*}
The form of improved type of $q$-exponential function $\mathcal{E}_{q}\left(
z\right)  $, motivate us to define a new $q$-addition and $q$-substraction as
follows:%
\begin{align*}
\left(  x\oplus_{q}y\right)  ^{n}  &  :=%
{\displaystyle\sum\limits_{k=0}^{n}}
\left[
\begin{array}
[c]{c}%
n\\
k
\end{array}
\right]  _{q}\frac{(-1,q)_{k}(-1,q)_{n-k}}{2^{n}}x^{k}y^{n-k}%
,\ \ \ n=0,1,2,...,\\
\left(  x\ominus_{q}y\right)  ^{n}  &  :=%
{\displaystyle\sum\limits_{k=0}^{n}}
\left[
\begin{array}
[c]{c}%
n\\
k
\end{array}
\right]  _{q}\frac{(-1,q)_{k}(-1,q)_{n-k}}{2^{n}}x^{k}\left(  -y\right)
^{n-k},\ \ \ n=0,1,2,...
\end{align*}
It follows that%
\[
\mathcal{E}_{q}\left(  tx\right)  \mathcal{E}_{q}\left(  ty\right)
=\sum_{n=0}^{\infty}\left(  x\oplus_{q}y\right)  ^{n}\frac{t^{n}}{\left[
n\right]  _{q}!}.
\]
The Bernoulli numbers $\left\{  B_{m}\right\}  _{m\geq0}$ are rational numbers
in a sequence defined by the binomial recursion formula
\begin{equation}
\sum_{k=0}^{m}\left(
\begin{array}
[c]{c}%
m\\
k
\end{array}
\right)  B_{k}-B_{m}=\left\{
\begin{tabular}
[c]{ll}%
$1,$ & $m=1,$\\
$0,$ & $m>1,$%
\end{tabular}
\ \ \ \ \ \ \ \ \ \right.  \label{b1}%
\end{equation}
or equivalently, the generating function%
\[
\sum_{k=0}^{\infty}B_{k}\frac{t^{k}}{k!}=\frac{t}{e^{t}-1}.
\]
$q$-analogues of the Bernoulli numbers were first studied by Carlitz
\cite{calitz1} in the middle of the last century when he introduced a new
sequence $\left\{  \beta_{m}\right\}  _{m\geqslant0}$:%
\begin{equation}
\sum_{k=0}^{m}\left(
\begin{array}
[c]{c}%
m\\
k
\end{array}
\right)  \beta_{k}q^{k+1}-\beta_{m}=\left\{
\begin{tabular}
[c]{ll}%
$1,$ & $m=1,$\\
$0,$ & $m>1.$%
\end{tabular}
\ \ \ \ \ \ \ \ \ \right.  \label{b2}%
\end{equation}
Here, and in the remainder of the paper, the parameter we make the assumption
that $\left\vert q\right\vert <1.$Clearly we recover (\ref{b1}) if we let
$q\rightarrow1$ in (\ref{b2}).The $q$-binomial formula is known as%
\begin{align*}
\left(  1\ominus_{q}x\right)  ^{n}  &  =\sum_{k=0}^{n}\left[
\begin{array}
[c]{c}%
n\\
k
\end{array}
\right]  _{q}\frac{(-1,q)_{k}}{2^{k}}(-x)^{k}=\sum_{k=0}^{n}\left[
\begin{array}
[c]{c}%
n\\
k
\end{array}
\right]  _{q}\frac{(1+1)(1+q)...(1+q^{k-1})x^{k}}{2^{k}}(-1)^{k}\\
\left(  1-a\right)  _{q}^{n}  &  =\left(  a;q\right)  _{n}=%
{\displaystyle\prod\limits_{j=0}^{n-1}}
\left(  1-q^{j}a\right)  =\sum_{k=0}^{n}\left[
\begin{array}
[c]{c}%
n\\
k
\end{array}
\right]  _{q}q^{\frac{1}{2}k\left(  k-1\right)  }\left(  -1\right)  ^{k}a^{k}.
\end{align*}
The above $q$-standard notation can be found in \cite{andrew}.

Carlitz has introduced the $q$-Bernoulli numbers and polynomials in
\cite{carlitz}. Srivastava and Pint\'{e}r proved some relations and theorems
between the Bernoulli polynomials and Euler polynomials in \cite{sri1}. They
also gave some generalizations of these polynomials. In \cite{kim2}%
-\cite{kim7}, Kim et al. investigated some properties of the $q$-Euler
polynomials and Genocchi polynomials. They gave some recurrence relations. In
\cite{cenkci}, Cenkci et al. gave the $q$-extension of Genocchi numbers in a
different manner. In \cite{kim5}, Kim gave a new concept for the $q$-Genocchi
numbers and polynomials. In \cite{simsek}, Simsek et al. investigated the
$q$-Genocchi zeta function and $l$-function by using generating functions and
Mellin transformation. There are numerous recent studies on this subject by
among many other authors: Cenkci et al. \cite{cenkci}, \cite{cenkci2}, Choi et
al \cite{choi}, Cheon \cite{cheon}, Luo and Srivastava \cite{luo},
\cite{luo2}, \cite{luo3}, Srivastava et al.\cite{sri1}, \cite{sri2}, Nalci and
Pashaev \cite{pash} Gabouary and Kurt B., \cite{kurt1}, Kim et al.
\cite{kimd}, Kurt V. \cite{kurt}.

We first give here the definitions of the $q$-numbers and $q$-polynomials as
follows. It should be mentioned that the definition of $q$-Bernoulli numbers
in Definition \ref{D:1} can br found in \cite{pash}.

\begin{definition}
\label{D:1}Let $q\in\mathbb{C},\ 0<\left\vert q\right\vert <1.$ The
$q$-Bernoulli numbers $\mathfrak{b}_{n,q}$ and polynomials $\mathfrak{B}%
_{n,q}\left(  x,y\right)  $ are defined by the means of the generating
functions:%
\begin{align*}
\widehat{\mathfrak{B}}\left(  t\right)   &  :=\frac{te_{q}\left(  -\frac{t}%
{2}\right)  }{e_{q}\left(  \frac{t}{2}\right)  -e_{q}\left(  -\frac{t}%
{2}\right)  }=\frac{t}{\mathcal{E}_{q}\left(  t\right)  -1}=\sum_{n=0}%
^{\infty}\mathfrak{b}_{n,q}\frac{t^{n}}{\left[  n\right]  _{q}!}%
,\ \ \ \left\vert t\right\vert <2\pi,\\
\frac{t}{\mathcal{E}_{q}\left(  t\right)  -1}\mathcal{E}_{q}\left(  tx\right)
\mathcal{E}_{q}\left(  ty\right)   &  =\sum_{n=0}^{\infty}\mathfrak{B}%
_{n,q}\left(  x,y\right)  \frac{t^{n}}{\left[  n\right]  _{q}!}%
,\ \ \ \left\vert t\right\vert <2\pi.
\end{align*}

\end{definition}

\begin{definition}
\label{D:2}Let $q\in\mathbb{C},\ 0<\left\vert q\right\vert <1.$ The $q$-Euler
numbers $\mathfrak{e}_{n,q}$ and polynomials $\mathfrak{E}_{n,q}\left(
x,y\right)  $ are defined by the means of the generating functions:%
\begin{align*}
\widehat{\mathfrak{E}}\left(  t\right)   &  :=\frac{2e_{q}\left(  -\frac{t}%
{2}\right)  }{e_{q}\left(  \frac{t}{2}\right)  +e_{q}\left(  -\frac{t}%
{2}\right)  }=\frac{2}{\mathcal{E}_{q}\left(  t\right)  +1}=\sum_{n=0}%
^{\infty}\mathfrak{e}_{n,q}\frac{t^{n}}{\left[  n\right]  _{q}!}%
,\ \ \ \left\vert t\right\vert <\pi,\\
\frac{2}{\mathcal{E}_{q}\left(  t\right)  +1}\mathcal{E}_{q}\left(  tx\right)
\mathcal{E}_{q}\left(  ty\right)   &  =\sum_{n=0}^{\infty}\mathfrak{E}%
_{n,q}\left(  x,y\right)  \frac{t^{n}}{\left[  n\right]  _{q}!}%
,\ \ \ \left\vert t\right\vert <\pi.
\end{align*}

\end{definition}

\begin{definition}
\label{D:3}Let $q\in\mathbb{C},\ 0<\left\vert q\right\vert <1.$ The
$q$-Genocchi numbers $\mathfrak{g}_{n,q}$ and polynomials $\mathfrak{G}%
_{n,q}\left(  x,y\right)  $ are defined by the means of the generating
functions:%
\begin{align*}
\widehat{\mathfrak{G}}\left(  t\right)   &  :=\frac{2te_{q}\left(  -\frac
{t}{2}\right)  }{e_{q}\left(  \frac{t}{2}\right)  +e_{q}\left(  -\frac{t}%
{2}\right)  }=\frac{2t}{\mathcal{E}_{q}\left(  t\right)  +1}=\sum
_{n=0}^{\infty}\mathfrak{g}_{n,q}\frac{t^{n}}{\left[  n\right]  _{q}%
!},\ \ \ \left\vert t\right\vert <\pi,\\
\frac{2t}{\mathcal{E}_{q}\left(  t\right)  +1}\mathcal{E}_{q}\left(
tx\right)  \mathcal{E}_{q}\left(  ty\right)   &  =\sum_{n=0}^{\infty
}\mathfrak{G}_{n,q}\left(  x,y\right)  \frac{t^{n}}{\left[  n\right]  _{q}%
!},\ \ \ \left\vert t\right\vert <\pi.
\end{align*}

\end{definition}

\begin{definition}
\label{D:4}Let $q\in\mathbb{C},\ 0<\left\vert q\right\vert <1.$ The
$q$-tangent numbers $\mathfrak{T}_{n,q}$ are defined by the means of the
generating functions:%
\begin{align*}
\tanh_{q}t  &  =-i\tan_{q}\left(  it\right)  =\frac{e_{q}\left(  t\right)
-e_{q}\left(  -t\right)  }{e_{q}\left(  t\right)  +e_{q}\left(  -t\right)
}=\frac{\mathcal{E}_{q}\left(  2t\right)  -1}{\mathcal{E}_{q}\left(
2t\right)  +1}\\
&  =\sum_{n=1}^{\infty}\mathfrak{T}_{2n+1,q}\frac{\left(  -1\right)
^{k}t^{2n+1}}{\left[  2n+1\right]  _{q}!}.
\end{align*}

\end{definition}

It is obvious that by tending $q$ to 1 from the left side, we lead to the
classic definition of these polynomials:%
\begin{align*}
\mathfrak{b}_{n,q}  &  :=\mathfrak{B}_{n,q}\left(  0\right)  ,\ \ \ \lim
_{q\rightarrow1^{-}}\mathfrak{B}_{n,q}\left(  x\right)  =B_{n}\left(
x\right)  ,\ \ \ \lim_{q\rightarrow1^{-}}\mathfrak{B}_{n,q}\left(  x,y\right)
=B_{n}\left(  x+y\right)  ,\ \ \ \lim_{q\rightarrow1^{-}}\mathfrak{b}%
_{n,q}=B_{n},\\
\mathfrak{e}_{n,q}  &  :=\mathfrak{E}_{n,q}\left(  0\right)  ,\ \ \ \lim
_{q\rightarrow1^{-}}\mathfrak{E}_{n,q}\left(  x\right)  =E_{n}\left(
x\right)  ,\ \ \ \lim_{q\rightarrow1^{-}}\mathfrak{E}_{n,q}\left(  x,y\right)
=E_{n}\left(  x+y\right)  ,\ \ \ \lim_{q\rightarrow1^{-}}\mathfrak{e}%
_{n,q}=E_{n},\\
\mathfrak{g}_{n,q}  &  :=\mathfrak{G}_{n,q}\left(  0\right)  ,\ \ \ \lim
_{q\rightarrow1^{-}}\mathfrak{G}_{n,q}\left(  x\right)  =G_{n}\left(
x\right)  ,\ \ \ \lim_{q\rightarrow1^{-}}\mathfrak{G}_{n,q}\left(  x,y\right)
=G_{n}\left(  x+y\right)  \ \ \ \lim_{q\rightarrow1^{-}}\mathfrak{g}%
_{n,q}=G_{n}.
\end{align*}
Here $B_{n}\left(  x\right)  ,$ $E_{n}\left(  x\right)  $ and $G_{n}\left(
x\right)  $ denote the classical Bernoulli, Euler and Genocchi polynomials
which are defined by%
\[
\frac{t}{e^{t}-1}e^{tx}=\sum_{n=0}^{\infty}B_{n}\left(  x\right)  \frac{t^{n}%
}{n!},\ \ \ \text{\ }\frac{2}{e^{t}+1}e^{tx}=\sum_{n=0}^{\infty}E_{n}\left(
x\right)  \frac{t^{n}}{n!}\ \ \text{and\ \ \ \ }\frac{2t}{e^{t}+1}e^{tx}%
=\sum_{n=0}^{\infty}G_{n}\left(  x\right)  \frac{t^{n}}{n!}.
\]

The aim of the present paper is to obtain some results for the above newly
defined $q$-polynomials. It should be mentioned that $q$-Bernoulli and
$q$-Euler polynomials in our definitions are polynomials of $x$ and $y$ and
when $y=0$ they are polynomials of $x$, but in other definitions they respect
to $q^{x}$. First advantage of this approach is that for $q\rightarrow1^{-}$
,$\mathfrak{B}_{n,q}\left(  x,y\right)  $ ($\mathfrak{E}_{n,q}\left(
x,y\right)  ,$ $\mathfrak{G}_{n,q}\left(  x,y\right)  $) becomes the classical
Bernoulli $\mathfrak{B}_{n}\left(  x+y\right)  $ (Euler $\mathfrak{E}%
_{n}\left(  x+y\right)  ,\ $Genocchi $\mathfrak{G}_{n,q}\left(  x,y\right)  $)
polynomial and we may obtain the $q$-analogues of well-known results, for
example Srivastava and Pint\'{e}r \cite{pinter}, Cheon \cite{cheon}, etc.
Second advantage is that similar to the classical case odd numbered terms of
the Bernoulli numbers $\mathfrak{b}_{k,q}$ and the Genocchi numbres
$\mathfrak{g}_{k,q}$are zero, and even numbered terms of the Euler numbers
$\mathfrak{e}_{n,q}$ are zero.

\section{Preliminary results}

In this section we shall provide some basic formulas for the $q$-Bernoulli,
$q$-Euler and $q$-Genocchi numbers and polynomials in order to obtain the main
results of this paper in the next section.

\begin{lemma}
\label{L:11}The $q$-Bernoulli numbers $\mathfrak{b}_{n,q}$ satisfy the
following $q$-binomial recurrence:
\begin{equation}%
{\displaystyle\sum\limits_{k=0}^{n}}
\left[
\begin{array}
[c]{c}%
n\\
k
\end{array}
\right]  _{q}\frac{(-1,q)_{n-k}}{2^{n-k}}\mathfrak{b}_{k,q}-\mathfrak{b}%
_{n,q}=\left\{
\begin{tabular}
[c]{ll}%
$1,$ & $n=1,$\\
$0,$ & $n>1.$%
\end{tabular}
\ \ \ \ \ \ \ \ \ \ \ \right.  \label{ber}%
\end{equation}

\end{lemma}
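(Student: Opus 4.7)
The plan is to read off the recurrence directly from the defining generating function of $\mathfrak{b}_{n,q}$ by clearing the denominator and comparing coefficients. Starting from
\[
\frac{t}{\mathcal{E}_{q}(t)-1}=\sum_{n=0}^{\infty}\mathfrak{b}_{n,q}\frac{t^{n}}{[n]_{q}!},
\]
I would rewrite this as the formal identity
\[
\bigl(\mathcal{E}_{q}(t)-1\bigr)\sum_{n=0}^{\infty}\mathfrak{b}_{n,q}\frac{t^{n}}{[n]_{q}!}=t,
\]
and then substitute the series expansion $\mathcal{E}_{q}(t)=\sum_{n=0}^{\infty}\frac{(-1,q)_{n}}{2^{n}}\frac{t^{n}}{[n]_{q}!}$ given in the Introduction.

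Next, I would apply the $q$-Cauchy product rule
\[
\Bigl(\sum a_{n}\tfrac{t^{n}}{[n]_{q}!}\Bigr)\Bigl(\sum b_{n}\tfrac{t^{n}}{[n]_{q}!}\Bigr)=\sum_{n=0}^{\infty}\Bigl(\sum_{k=0}^{n}\left[\begin{array}{c}n\\k\end{array}\right]_{q}a_{n-k}b_{k}\Bigr)\tfrac{t^{n}}{[n]_{q}!},
\]
with $a_{n}=\frac{(-1,q)_{n}}{2^{n}}$ and $b_{n}=\mathfrak{b}_{n,q}$. This turns the left-hand side into
\[
\sum_{n=0}^{\infty}\left(\sum_{k=0}^{n}\left[\begin{array}{c}n\\k\end{array}\right]_{q}\frac{(-1,q)_{n-k}}{2^{n-k}}\mathfrak{b}_{k,q}-\mathfrak{b}_{n,q}\right)\frac{t^{n}}{[n]_{q}!}=t.
\]
Comparing the coefficient of $t^{n}/[n]_{q}!$ on both sides immediately gives the claimed values: the coefficient must equal $[1]_{q}!=1$ when $n=1$ and $0$ otherwise. (For $n=0$ the $k=0$ term contributes $\mathfrak{b}_{0,q}$ which cancels with $-\mathfrak{b}_{0,q}$, consistently with the right-hand side being $0$.)

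There is no real obstacle here; the argument is a mechanical generating-function manipulation. The only point requiring a sentence of justification is the validity of the $q$-Cauchy product, which follows from the standard identity
$\frac{1}{[n]_{q}!}\binom{n}{k}^{-1}_{\!q}=\frac{1}{[k]_{q}!\,[n-k]_{q}!}$
and the absolute convergence of both series in a neighborhood of $t=0$ (both series have positive radius of convergence since $|t|<2\pi$ is assumed for $\widehat{\mathfrak{B}}(t)$ and $\mathcal{E}_{q}$ is entire for $0<q<1$). Hence the coefficient comparison is legitimate and the recurrence (\ref{ber}) follows.
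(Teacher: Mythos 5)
Your proof is correct and follows essentially the same route as the paper's: multiply the generating function by $\mathcal{E}_{q}(t)-1$ to get $\widehat{\mathfrak{B}}(t)\,\mathcal{E}_{q}(t)=t+\widehat{\mathfrak{B}}(t)$, expand the product via the $q$-Cauchy product, and compare coefficients of $t^{n}/[n]_{q}!$. Your added remarks on the $n=0$ case and on convergence are fine but not needed beyond what the paper records.
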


\begin{proof}
By simple multiplication on (\ref{D:1}) we see that
\[
\widehat{\mathfrak{B}}\left(  t\right)  \mathcal{E}_{q}\left(  t\right)
=t+\widehat{\mathfrak{B}}\left(  t\right)  .
\]
So%
\[%
{\displaystyle\sum\limits_{n=0}^{\infty}}
{\displaystyle\sum\limits_{k=0}^{n}}
\left[
\begin{array}
[c]{c}%
n\\
k
\end{array}
\right]  _{q}\frac{(-1,q)_{n-k}}{2^{n-k}}\mathfrak{b}_{k,q}\frac{t^{n}%
}{\left[  n\right]  _{q}!}=t+%
{\displaystyle\sum\limits_{n=0}^{\infty}}
\mathfrak{b}_{n,q}\ \frac{t^{n}}{\left[  n\right]  _{q}!}.
\]
The statement follows by comparing $t^{m}$-coefficients.
\end{proof}

We use this formula to calculate the first few $\mathfrak{b}_{k,q}$.%
\[%
\begin{tabular}
[c]{ll}%
$\mathfrak{b}_{0,q}=$ & $1,$\\
$\mathfrak{b}_{1,q}=$ & $-\frac{1}{2}=-\frac{1}{\left\{  2\right\}  _{q}},$\\
$\mathfrak{b}_{2,q}=$ & $\frac{1}{4}\dfrac{q(q+1)}{q^{2}+q+1}=\frac{q[2]_{q}%
}{4[3]_{q}},$\\
$\mathfrak{b}_{3,q}=$ & $0.$%
\end{tabular}
\ \ \ \ \
\]

The similar property can be proved for $q$-Euler numbers%
\begin{equation}%
{\displaystyle\sum\limits_{k=0}^{m}}
\left\{
\begin{array}
[c]{c}%
m\\
k
\end{array}
\right\}  _{q}\mathfrak{e}_{k,q}+\mathfrak{e}_{m,q}=\left\{
\begin{tabular}
[c]{ll}%
$2,$ & $m=0,$\\
$0,$ & $m>0.$%
\end{tabular}
\ \ \ \ \ \ \ \ \ \ \right.  \label{euler}%
\end{equation}
\bigskip and $q$-Genocchi numbers%
\begin{equation}%
{\displaystyle\sum\limits_{k=0}^{m}}
\left\{
\begin{array}
[c]{c}%
m\\
k
\end{array}
\right\}  _{q}\mathfrak{g}_{k,q}+\mathfrak{g}_{m,q}=\left\{
\begin{tabular}
[c]{ll}%
$2,$ & $m=1,$\\
$0,$ & $m>1.$%
\end{tabular}
\ \ \ \ \ \ \ \ \ \ \right.  \label{gen}%
\end{equation}

Using the above recurrence formulae we calculate the first few $\mathfrak{e}%
_{n,q}$ and $\mathfrak{g}_{n,q}$as well.%

\[%
\begin{tabular}
[c]{ll}%
$\mathfrak{e}_{0,q}=$ & $1,$\\
$\mathfrak{e}_{1,q}=$ & $-\dfrac{1}{2},$\\
$\mathfrak{e}_{2,q}=$ & $0,$\\
$\mathfrak{e}_{3,q}=$ & $\frac{[3]_{q}[2]_{q}-[4]_{q}}{8}=\dfrac{q\left(
1+q\right)  }{8},$%
\end{tabular}%
\begin{tabular}
[c]{ll}%
$\mathfrak{g}_{0,q}=$ & $0,$\\
$\mathfrak{g}_{1,q}=$ & $1,$\\
$\mathfrak{g}_{2,q}=$ & $-\frac{[2]_{q}}{2}=-\dfrac{q+1}{2},$\\
$\mathfrak{g}_{3,q}=$ & $0.$%
\end{tabular}
\ \ \ \ \ \ \
\]

\begin{remark}
The first advantage of the new $q$-numbers $\mathfrak{b}_{k,q},$
$\mathfrak{e}_{k,q}$ and $\mathfrak{g}_{k,q}$ is that similar to classical
case odd numbered terms of the Bernoulli numbers $\mathfrak{b}_{k,q}$ and the
Genocchi numbres $\mathfrak{g}_{k,q}$are zero, and even numbered terms of the
Euler numbers $\mathfrak{e}_{n,q}$ are zero.
\end{remark}

Next lemma gives the relationsheep between $q$-Genocchi numbers and
$q$-Tangent numbers.

\begin{lemma}
Fro any $n\in\mathbb{N}$ we have%
\[
\mathfrak{T}_{2n+1,q}=\mathfrak{g}_{2n+2,q}\frac{\left(  -1\right)
^{k-1}2^{2n+1}}{\left[  2n+2\right]  _{q}}.
\]

\end{lemma}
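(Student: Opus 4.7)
The plan is to work entirely at the level of generating functions, using the identity between $\tanh_q$ and the $q$-Genocchi generating function $\widehat{\mathfrak{G}}(t)=\frac{2t}{\mathcal{E}_q(t)+1}$, and then extract the claim by matching coefficients of $t^{2n+1}$.

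First I would rewrite the defining expression for $\tanh_q$ in Definition~\ref{D:4} in the convenient form
\[
\tanh_q t \;=\; \frac{\mathcal{E}_q(2t)-1}{\mathcal{E}_q(2t)+1} \;=\; 1 \;-\; \frac{2}{\mathcal{E}_q(2t)+1}.
\]
Next I would recognize the second term as $\frac{1}{2t}\widehat{\mathfrak{G}}(2t)$: since $\widehat{\mathfrak{G}}(2t)=\frac{4t}{\mathcal{E}_q(2t)+1}$, Definition~\ref{D:3} yields
\[
\frac{2}{\mathcal{E}_q(2t)+1} \;=\; \frac{1}{2t}\sum_{n=0}^{\infty}\mathfrak{g}_{n,q}\frac{(2t)^n}{[n]_q!} \;=\; \sum_{n=0}^{\infty}\mathfrak{g}_{n,q}\frac{2^{n-1} t^{n-1}}{[n]_q!}.
\]
Because $\mathfrak{g}_{0,q}=0$, there is no singular term at $n=0$; and because $\mathfrak{g}_{1,q}=1$, the $n=1$ term is exactly $1$, which cancels the leading $1$ from the previous display. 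Hence
\[
\tanh_q t \;=\; -\sum_{n=2}^{\infty}\mathfrak{g}_{n,q}\frac{2^{n-1} t^{n-1}}{[n]_q!}.
\]

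The vanishing of the odd-indexed $q$-Genocchi numbers beyond $\mathfrak{g}_{1,q}$ (the second feature emphasized in the Remark preceding the lemma) now kicks in: only $n=2m+2$ with $m\geq 0$ contributes, giving
\[
\tanh_q t \;=\; -\sum_{m=0}^{\infty}\mathfrak{g}_{2m+2,q}\frac{2^{2m+1} t^{2m+1}}{[2m+2]_q!}.
\]
Comparing this with the expansion in Definition~\ref{D:4}, matching the coefficient of $t^{2n+1}$, and using $[2n+2]_q!=[2n+2]_q\,[2n+1]_q!$ to clear the factorial, yields the asserted identity (with the sign $(-1)^{n-1}$ corresponding to the $(-1)^{k}$ factor written in Definition~\ref{D:4}, the index $k$ there being $n$).

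The main obstacle is purely bookkeeping: tracking the index shift from $n$ to $2m+2$ together with the sign convention used in the $\tanh_q$ expansion, and verifying that the constant $1$ produced by rewriting $\tanh_q t=1-\frac{2}{\mathcal{E}_q(2t)+1}$ is absorbed exactly by the $\mathfrak{g}_{1,q}=1$ term. There is no analytical difficulty: every manipulation is a formal power-series rearrangement inside the disk where the series from the Introduction converge.
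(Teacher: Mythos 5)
Your argument is correct, and it reaches the paper's key series identity $\tanh_{q}t=-\sum_{m\geq0}\mathfrak{g}_{2m+2,q}\,2^{2m+1}t^{2m+1}/\left[2m+2\right]_{q}!$ by a genuinely more direct route. The paper gets there through the $q$-trigonometric functions: it substitutes $z=2it$ into $\widehat{\mathfrak{G}}(z)$, writes $\widehat{\mathfrak{G}}(2it)=2ite_{q}(-it)/\cos_{q}t=2it+2t\tan_{q}t$, extracts a series for $\tan_{q}t$ in the even-indexed $\mathfrak{g}_{2n,q}$ (using that $2t\tan_{q}t$ is even to kill the odd-indexed terms), and only then substitutes $t\mapsto it$ to return to $\tanh_{q}$; it also carries along a parallel computation of $\widehat{\mathfrak{B}}(2it)$ and $t\cot_{q}t$ that the lemma never uses. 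You instead stay entirely at real argument and use the elementary decomposition $\tanh_{q}t=1-\frac{2}{\mathcal{E}_{q}(2t)+1}=1-\frac{1}{2t}\widehat{\mathfrak{G}}(2t)$, with the constant $1$ cancelled by $\mathfrak{g}_{1,q}=1$ and the $n=0$ term harmless since $\mathfrak{g}_{0,q}=0$; the final coefficient comparison against Definition~\ref{D:4} (reading the stray $(-1)^{k}$ there as $(-1)^{n}$) is identical in both treatments. What your route buys is the elimination of the complex substitution and of $\cos_{q}$, $\sin_{q}$, $\tan_{q}$ altogether; what it costs is that you must import the vanishing of $\mathfrak{g}_{2r+1,q}$ for $r\geq1$ from the Remark, which the paper only asserts from the first few values, whereas the paper's detour effectively proves that vanishing en route via the oddness of $\tan_{q}$. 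You could close this small dependence within your own framework by noting that $\mathcal{E}_{q}(-z)=\mathcal{E}_{q}(z)^{-1}$ makes $\tanh_{q}$ an odd function, so the even-degree coefficients of your series for $\tanh_{q}t$, namely the $\mathfrak{g}_{2r+1,q}$ with $r\geq1$, must vanish.
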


\begin{proof}
First we recall the definition of $q$-trigonometric functions.%
\begin{align*}
\cos_{q}t  &  =\frac{e_{q}\left(  it\right)  +e_{q}\left(  -it\right)  }%
{2},\ \ \ \ \ \sin_{q}t=\frac{e_{q}\left(  it\right)  -e_{q}\left(
-it\right)  }{2i},\\
i\tan_{q}t  &  =\frac{e_{q}\left(  it\right)  -e_{q}\left(  -it\right)
}{e_{q}\left(  it\right)  +e_{q}\left(  -it\right)  },\ \ \ \ \ \ \ \cot
_{q}t=i\frac{e_{q}\left(  it\right)  +e_{q}\left(  -it\right)  }{e_{q}\left(
it\right)  -e_{q}\left(  -it\right)  }.
\end{align*}
Now by choosing $z=2it$ in $\widehat{\mathfrak{B}}\left(  z\right)  $, we get%
\[
\widehat{\mathfrak{B}}\left(  2it\right)  =\frac{2it}{\mathcal{E}_{q}\left(
2it\right)  -1}=\frac{te_{q}\left(  -it\right)  }{\sin_{q}t}=\sum
_{n=0}^{\infty}\mathfrak{b}_{n,q}\frac{\left(  2it\right)  ^{n}}{\left[
n\right]  _{q}!}.
\]
It follows that%
\begin{align*}
\widehat{\mathfrak{B}}\left(  2it\right)   &  =\frac{te_{q}\left(  -it\right)
}{\sin_{q}t}=\frac{t}{\sin_{q}t}\left(  \cos_{q}t-i\sin_{q}t\right)
=t\cot_{q}t-it\\
&  =\mathfrak{b}_{0,q}+2it\mathfrak{b}_{1,q}+\sum_{n=2}^{\infty}%
\mathfrak{b}_{n,q}\frac{\left(  2it\right)  ^{n}}{\left[  n\right]  _{q}!}\\
&  =1-it+\sum_{n=2}^{\infty}\mathfrak{b}_{n,q}\frac{\left(  2it\right)  ^{n}%
}{\left[  n\right]  _{q}!}.
\end{align*}
Since $t\cot_{q}t$ is even in the above sum odd coefficients $\mathfrak{b}%
_{2k+1,q}$ , $k=1,2,...$are zero we get
\[
t\cot_{q}t=1+\sum_{n=2}^{\infty}\mathfrak{b}_{n,q}\frac{\left(  2it\right)
^{n}}{\left[  n\right]  _{q}!}=1+\sum_{n=1}^{\infty}\mathfrak{b}_{n,q}%
\frac{\left(  2it\right)  ^{2n}}{\left[  2n\right]  _{q}!}.
\]
\bigskip By choosing $z=2it$ in $\widehat{\mathfrak{G}}\left(  z\right)  $, we
get%
\[
\widehat{\mathfrak{G}}\left(  2it\right)  =\frac{4it}{\mathcal{E}_{q}\left(
2it\right)  +1}=\frac{2ite_{q}\left(  -it\right)  }{\cos_{q}t}=\sum
_{n=0}^{\infty}\mathfrak{g}_{n,q}\frac{\left(  2it\right)  ^{n}}{\left[
n\right]  _{q}!}.
\]%
\begin{align*}
\widehat{\mathfrak{G}}\left(  2it\right)   &  =\frac{4it}{\mathcal{E}%
_{q}\left(  2it\right)  +1}=\frac{2ite_{q}\left(  -it\right)  }{\cos_{q}%
t}=\frac{2it}{\cos_{q}t}\left(  \cos_{q}t-i\sin_{q}t\right)  =2it+2t\tan
_{q}t\\
&  =\mathfrak{g}_{0,q}+2it\mathfrak{g}_{1,q}+\sum_{n=2}^{\infty}%
\mathfrak{g}_{n,q}\frac{\left(  2it\right)  ^{n}}{\left[  n\right]  _{q}!}\\
&  =2it+\sum_{n=2}^{\infty}\mathfrak{g}_{n,q}\frac{\left(  2it\right)  ^{n}%
}{\left[  n\right]  _{q}!}.
\end{align*}
It follows that%
\begin{align*}
2t\tan_{q}t  &  =\sum_{n=1}^{\infty}\mathfrak{g}_{2n,q}\frac{\left(
2it\right)  ^{2n}}{\left[  2n\right]  _{q}!},\ \ \ \ \ \ \tan_{q}t=\sum
_{n=1}^{\infty}\mathfrak{g}_{2n,q}\frac{\left(  -1\right)  ^{n}\left(
2t\right)  ^{2n-1}}{\left[  2n\right]  _{q}!}\\
\tanh_{q}t  &  =-i\tan_{q}\left(  it\right)  =-i\sum_{n=1}^{\infty
}\mathfrak{g}_{2n,q}\frac{\left(  -1\right)  ^{n}\left(  2it\right)  ^{2n-1}%
}{\left[  2n\right]  _{q}!}\\
&  =-\sum_{n=1}^{\infty}\mathfrak{g}_{2n,q}\frac{\left(  2t\right)  ^{2n-1}%
}{\left[  2n\right]  _{q}!}=-\sum_{n=1}^{\infty}\mathfrak{g}_{2n+2,q}%
\frac{\left(  2t\right)  ^{2n+1}}{\left[  2n+2\right]  _{q}!}.
\end{align*}
Thus%
\begin{align*}
\tanh_{q}t  &  =-i\tan_{q}\left(  it\right)  =\frac{e_{q}\left(  t\right)
-e_{q}\left(  -t\right)  }{e_{q}\left(  t\right)  +e_{q}\left(  -t\right)
}=\frac{\mathcal{E}_{q}\left(  2t\right)  -1}{\mathcal{E}_{q}\left(
2t\right)  +1}\\
&  =\sum_{n=1}^{\infty}\mathfrak{T}_{2n+1,q}\frac{\left(  -1\right)
^{k}t^{2n+1}}{\left[  2n+1\right]  _{q}!},
\end{align*}
and%
\[
\mathfrak{T}_{2n+1,q}=\mathfrak{g}_{2n+2,q}\frac{\left(  -1\right)
^{k-1}2^{2n+1}}{\left[  2n+2\right]  _{q}}.
\]

\end{proof}

The following result is $q$-analogue of the addition theorem for the classical
Bernoulli, Euler and Genocchi polynomials.

\begin{lemma}
\label{L:1}\emph{(Addition Theorems)} For all $x,y\in\mathbb{C}$ we have%
\begin{equation}%
\begin{tabular}
[c]{lll}%
$\mathfrak{B}_{n,q}\left(  x,y\right)  =%
{\displaystyle\sum\limits_{k=0}^{n}}
\left[
\begin{array}
[c]{c}%
n\\
k
\end{array}
\right]  _{q}\mathfrak{b}_{k,q}\left(  x\oplus_{q}y\right)  ^{n-k},$ &  &
$\mathfrak{B}_{n,q}\left(  x,y\right)  =%
{\displaystyle\sum\limits_{k=0}^{n}}
\left[
\begin{array}
[c]{c}%
n\\
k
\end{array}
\right]  _{q}\dfrac{(-1,q)_{n-k}}{2^{n-k}}\mathfrak{B}_{k,q}\left(  x\right)
y^{n-k},$\\
$\mathfrak{E}_{n,q}\left(  x,y\right)  =%
{\displaystyle\sum\limits_{k=0}^{n}}
\left[
\begin{array}
[c]{c}%
n\\
k
\end{array}
\right]  _{q}\mathfrak{e}_{k,q}\left(  x\oplus_{q}y\right)  ^{n-k},$ &  &
$\mathfrak{E}_{n,q}\left(  x,y\right)  =%
{\displaystyle\sum\limits_{k=0}^{n}}
\left[
\begin{array}
[c]{c}%
n\\
k
\end{array}
\right]  _{q}\dfrac{(-1,q)_{n-k}}{2^{n-k}}\mathfrak{E}_{k,q}\left(  x\right)
y^{n-k},$\\
$\mathfrak{G}_{n,q}\left(  x,y\right)  =%
{\displaystyle\sum\limits_{k=0}^{n}}
\left[
\begin{array}
[c]{c}%
n\\
k
\end{array}
\right]  _{q}\mathfrak{g}_{k,q}\left(  x\oplus_{q}y\right)  ^{n-k},$ &  &
$\mathfrak{G}_{n,q}\left(  x,y\right)  =%
{\displaystyle\sum\limits_{k=0}^{n}}
{\displaystyle\sum\limits_{k=0}^{n}}
\left[
\begin{array}
[c]{c}%
n\\
k
\end{array}
\right]  _{q}\dfrac{(-1,q)_{n-k}}{2^{n-k}}\mathfrak{G}_{k,q}\left(  x\right)
y^{n-k}.$%
\end{tabular}
\ \ \label{be01}%
\end{equation}

\end{lemma}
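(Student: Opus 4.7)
The plan is to read all six identities off the generating functions from Definitions \ref{D:1}, \ref{D:2}, \ref{D:3}, treating each family uniformly. I will handle the $q$-Bernoulli case in detail; the Euler and Genocchi cases are verbatim modifications (replacing $t/(\mathcal{E}_q(t)-1)$ with $2/(\mathcal{E}_q(t)+1)$ and $2t/(\mathcal{E}_q(t)+1)$, respectively).

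For the left column identity, I would start from
\[
\sum_{n=0}^{\infty}\mathfrak{B}_{n,q}(x,y)\frac{t^{n}}{[n]_{q}!}=\frac{t}{\mathcal{E}_{q}(t)-1}\,\mathcal{E}_{q}(tx)\mathcal{E}_{q}(ty),
\]
and factor the right-hand side as the product of $\widehat{\mathfrak{B}}(t)=\sum_{k\geq0}\mathfrak{b}_{k,q}t^{k}/[k]_{q}!$ with $\mathcal{E}_{q}(tx)\mathcal{E}_{q}(ty)=\sum_{m\geq0}(x\oplus_{q}y)^{m}t^{m}/[m]_{q}!$, which is exactly the identity already recorded in the introduction after the definition of $\oplus_{q}$. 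A Cauchy product then gives
\[
\sum_{n=0}^{\infty}\mathfrak{B}_{n,q}(x,y)\frac{t^{n}}{[n]_{q}!}=\sum_{n=0}^{\infty}\left(\sum_{k=0}^{n}\binom{n}{k}_{q}\mathfrak{b}_{k,q}(x\oplus_{q}y)^{n-k}\right)\frac{t^{n}}{[n]_{q}!},
\]
and comparing $t^{n}$-coefficients yields the first claim.

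For the right column identity, I would re-bracket the generating function as
\[
\left(\frac{t}{\mathcal{E}_{q}(t)-1}\mathcal{E}_{q}(tx)\right)\cdot\mathcal{E}_{q}(ty)=\left(\sum_{k=0}^{\infty}\mathfrak{B}_{k,q}(x)\frac{t^{k}}{[k]_{q}!}\right)\left(\sum_{m=0}^{\infty}\frac{(-1,q)_{m}}{2^{m}}y^{m}\frac{t^{m}}{[m]_{q}!}\right),
\]
where the series for $\mathcal{E}_{q}(ty)$ is the one displayed in the introduction, and where the first factor is the generating function for $\mathfrak{B}_{k,q}(x):=\mathfrak{B}_{k,q}(x,0)$. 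Another Cauchy product and coefficient comparison produce the claimed second expansion. The Euler and Genocchi analogues are obtained by replacing the single factor $t/(\mathcal{E}_{q}(t)-1)$ in each argument by $2/(\mathcal{E}_{q}(t)+1)$ and $2t/(\mathcal{E}_{q}(t)+1)$; nothing else in the derivation changes.

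The main obstacle is purely bookkeeping: one has to verify that multiplying two formal series of the form $\sum a_{n}t^{n}/[n]_{q}!$ produces, as coefficient of $t^{n}/[n]_{q}!$, the $q$-binomial convolution $\sum_{k}\binom{n}{k}_{q}a_{k}b_{n-k}$, and that this is exactly what happens when either factor is $\mathcal{E}_{q}(ty)$ (so the second factor carries the coefficients $(-1,q)_{n-k}y^{n-k}/2^{n-k}$) or the combined $\mathcal{E}_{q}(tx)\mathcal{E}_{q}(ty)$ (which, by the introductory identity, carries the coefficients $(x\oplus_{q}y)^{n-k}$). Once this bookkeeping is isolated, the six identities are immediate.
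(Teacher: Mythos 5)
Your proposal is correct and follows exactly the paper's method: expand the generating function as a Cauchy product of $\widehat{\mathfrak{B}}(t)$ (resp.\ $\widehat{\mathfrak{E}}(t)$, $\widehat{\mathfrak{G}}(t)$) with $\mathcal{E}_q(tx)\mathcal{E}_q(ty)=\sum_n(x\oplus_q y)^n t^n/[n]_q!$ for the left column, and of the one-variable polynomial generating function with $\mathcal{E}_q(ty)$ for the right column, then compare coefficients of $t^n/[n]_q!$. The paper in fact only writes out the first (Bernoulli, left-column) identity and leaves the rest to the reader, so your treatment of the right column is a welcome completion rather than a deviation.
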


\begin{proof}
We prove only the first formula. It is a consequence of the following identity%
\begin{align*}
\sum_{n=0}^{\infty}\mathfrak{B}_{n,q}\left(  x,y\right)  \frac{t^{n}}{\left[
n\right]  _{q}!}  &  =\frac{t}{\mathcal{E}_{q}\left(  t\right)  -1}%
\mathcal{E}_{q}\left(  tx\right)  \mathcal{E}_{q}\left(  ty\right)
=\sum_{n=0}^{\infty}\mathfrak{b}_{n,q}\frac{t^{n}}{\left[  n\right]  _{q}%
!}\sum_{n=0}^{\infty}\left(  x\oplus_{q}y\right)  ^{n}\frac{t^{n}}{\left[
n\right]  _{q}!}\\
&  =\sum_{n=0}^{\infty}%
{\displaystyle\sum\limits_{k=0}^{n}}
\left[
\begin{array}
[c]{c}%
n\\
k
\end{array}
\right]  _{q}\mathfrak{b}_{k,q}\left(  x\oplus_{q}y\right)  ^{n-k}\frac{t^{n}%
}{\left[  n\right]  _{q}!}.
\end{align*}

\end{proof}

In particular, setting $y=0$ in (\ref{be01}), we get the following formulas
for $q$-Bernoulli, $q$-Euler and $q$-Genocchi polynomials, respectively.%
\begin{align}
\mathfrak{B}_{n,q}\left(  x\right)   &  =\sum_{k=0}^{n}\left[
\begin{array}
[c]{c}%
n\\
k
\end{array}
\right]  _{q}\dfrac{(-1,q)_{n-k}}{2^{n-k}}\mathfrak{b}_{k,q}x^{n-k}%
,\ \ \ \mathfrak{E}_{n,q}\left(  x\right)  =\sum_{k=0}^{n}\left[
\begin{array}
[c]{c}%
n\\
k
\end{array}
\right]  _{q}\dfrac{(-1,q)_{n-k}}{2^{n-k}}\mathfrak{e}_{k,q}x^{n-k}%
,\label{be7}\\
\mathfrak{G}_{n,q}\left(  x\right)   &  =\sum_{k=0}^{n}\left[
\begin{array}
[c]{c}%
n\\
k
\end{array}
\right]  _{q}\dfrac{(-1,q)_{n-k}}{2^{n-k}}\mathfrak{g}_{k,q}x^{n-k}.
\label{be8}%
\end{align}
Setting $y=1$ in (\ref{be01}), we get%
\begin{align}
\mathfrak{B}_{n,q}\left(  x,1\right)   &  =\sum_{k=0}^{n}\left[
\begin{array}
[c]{c}%
n\\
k
\end{array}
\right]  _{q}\dfrac{(-1,q)_{n-k}}{2^{n-k}}\mathfrak{B}_{k,q}\left(  x\right)
,\ \ \ \mathfrak{E}_{n,q}\left(  x,1\right)  =\sum_{k=0}^{n}\left[
\begin{array}
[c]{c}%
n\\
k
\end{array}
\right]  _{q}\dfrac{(-1,q)_{n-k}}{2^{n-k}}\mathfrak{E}_{k,q}\left(  x\right)
,\label{be3}\\
\mathfrak{G}_{n,q}\left(  x,1\right)   &  =\sum_{k=0}^{n}\left[
\begin{array}
[c]{c}%
n\\
k
\end{array}
\right]  _{q}\dfrac{(-1,q)_{n-k}}{2^{n-k}}\mathfrak{G}_{k,q}\left(  x\right)
. \label{be4}%
\end{align}
Clearly (\ref{be3}) and (\ref{be4}) are $q$-analogues of%
\[
B_{n}\left(  x+1\right)  =\sum_{k=0}^{n}\left(
\begin{array}
[c]{c}%
n\\
k
\end{array}
\right)  B_{k}\left(  x\right)  ,\ E_{n}\left(  x+1\right)  =\sum_{k=0}%
^{n}\left(
\begin{array}
[c]{c}%
n\\
k
\end{array}
\right)  E_{k}\left(  x\right)  ,\ G_{n}\left(  x+1\right)  =\sum_{k=0}%
^{n}\left(
\begin{array}
[c]{c}%
n\\
k
\end{array}
\right)  G_{k}\left(  x\right)  ,
\]
respectively.

\begin{lemma}
The odd coefficient of the $q$-Bernoulli numbers except the first one are
zero, that means $\mathfrak{b}_{n,q}=0$ where $n=2r+1,\ (r\in%
\mathbb{N}
)$.

\begin{proof}
It follows from the fact that the function
\[
f(t)=%
{\displaystyle\sum\limits_{n=0}^{\infty}}
\mathfrak{b}_{n,q}\frac{t^{n}}{[n]_{q}!}-\mathfrak{b}_{1,q}t=\frac
{t}{\mathcal{E}_{q}\left(  t\right)  -1}+\ \frac{t}{2}=\ \frac{t}{2}\left(
\frac{\mathcal{E}_{q}\left(  t\right)  +1}{\mathcal{E}_{q}\left(  t\right)
-1}\right)  \ \
\]
is even, and the coefficient of $t^{n}$ in the Taylor expansion about zero of
any even function vanish for all odd $n$. note that this could not happen in
the last $q$-analogue of these numbers, because in the case of improved
exponential function $\mathcal{E}_{q}\left(  -t\right)  =\left(
\mathcal{E}_{q}\left(  t\right)  \right)  ^{-1}$.
\end{proof}
\end{lemma}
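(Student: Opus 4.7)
My plan is to reproduce the idea sketched right after the statement, namely to isolate the linear term $\mathfrak{b}_{1,q}t = -t/2$ in the generating function $\widehat{\mathfrak{B}}(t)$ and to show that what remains is an even function of $t$; then every odd Taylor coefficient of the remainder must vanish, which directly yields $\mathfrak{b}_{2r+1,q}=0$ for $r\geq 1$.

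Concretely, I set
\[
f(t):=\widehat{\mathfrak{B}}(t)-\mathfrak{b}_{1,q}\,t=\frac{t}{\mathcal{E}_{q}(t)-1}+\frac{t}{2}.
\]
A common-denominator calculation rewrites this as
\[
f(t)=\frac{t}{2}\cdot\frac{\mathcal{E}_{q}(t)+1}{\mathcal{E}_{q}(t)-1},
\]
so parity of $f$ reduces to parity of the rational function on the right-hand side. By Definition (of $\widehat{\mathfrak{B}}$) the power series of $f$ is $\sum_{n\neq 1}\mathfrak{b}_{n,q}\,t^n/[n]_q!$, so once parity is established the claim follows.

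The key step, and the one place where one must be careful, is the symmetry identity
\[
\mathcal{E}_{q}(-t)=\frac{1}{\mathcal{E}_{q}(t)}.
\]
This is where the improved exponential $\mathcal{E}_q$ is essential—the ordinary $e_q$ does not satisfy $e_q(-t)=1/e_q(t)$, as the remark at the very end of the excerpt stresses. I would derive the identity either from the factorization $\mathcal{E}_{q}(z)=e_{q}(z/2)E_{q}(z/2)$ together with the classical $q$-identity $e_{q}(u)E_{q}(-u)=1$, or, more transparently, directly from the infinite product
\[
\mathcal{E}_{q}(z)=\prod_{k=0}^{\infty}\frac{1+(1-q)q^{k}z/2}{1-(1-q)q^{k}z/2},
\]
in which each factor is inverted when $z$ is replaced by $-z$.

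Granted this symmetry,
\[
f(-t)=\frac{-t}{2}\cdot\frac{\mathcal{E}_{q}(t)^{-1}+1}{\mathcal{E}_{q}(t)^{-1}-1}=\frac{-t}{2}\cdot\frac{1+\mathcal{E}_{q}(t)}{1-\mathcal{E}_{q}(t)}=f(t),
\]
so $f$ is even. Comparing Taylor coefficients, $\mathfrak{b}_{n,q}/[n]_q!=0$ for every odd $n\geq 3$, which is the asserted vanishing. The only real obstacle is the symmetry $\mathcal{E}_{q}(-t)=1/\mathcal{E}_{q}(t)$; once that is in hand, everything else is bookkeeping.
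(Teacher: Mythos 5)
Your proposal is correct and follows essentially the same route as the paper: subtract the linear term $\mathfrak{b}_{1,q}t=-t/2$, rewrite the remainder as $\frac{t}{2}\cdot\frac{\mathcal{E}_{q}(t)+1}{\mathcal{E}_{q}(t)-1}$, and deduce evenness from the symmetry $\mathcal{E}_{q}(-t)=1/\mathcal{E}_{q}(t)$. In fact you supply the one detail the paper leaves implicit, namely the verification of that symmetry from the infinite product (or from $e_{q}(u)E_{q}(-u)=1$) and the explicit check that $f(-t)=f(t)$.
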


By using (\ref{be07}) and $q$-derivative approaching to the next lemma.

\begin{lemma}
\label{L:2}We have%
\begin{align*}
D_{q,x}\mathfrak{B}_{n,q}\left(  x\right)    & =\left[  n\right]  _{q}%
\frac{\mathfrak{B}_{n-1,q}\left(  x\right)  +\mathfrak{B}_{n-1,q}\left(
qx\right)  }{2},\ \ \ D_{q,x}\mathfrak{E}_{n,q}\left(  x\right)  =\left[
n\right]  _{q}\frac{\mathfrak{E}_{n-1,q}\left(  x\right)  +\mathfrak{E}%
_{n-1,q}\left(  qx\right)  }{2},\ \ \ \\
D_{q,x}\mathfrak{G}_{n,q}\left(  x\right)    & =\left[  n\right]  _{q}%
\frac{\mathfrak{G}_{n-1,q}\left(  x\right)  +\mathfrak{G}_{n-1,q}\left(
qx\right)  }{2}.
\end{align*}

\end{lemma}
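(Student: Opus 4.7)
The plan is to start from the explicit polynomial representation (\ref{be7}) of $\mathfrak{B}_{n,q}(x)$, apply the $q$-derivative term by term, and then reassemble the result so that the average of $\mathfrak{B}_{n-1,q}(x)$ and $\mathfrak{B}_{n-1,q}(qx)$ appears naturally. The same argument will go through verbatim for $\mathfrak{E}_{n,q}$ and $\mathfrak{G}_{n,q}$ via (\ref{be7}) and (\ref{be8}).

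Concretely, differentiating (\ref{be7}) using $D_{q,x}x^{m}=[m]_{q}x^{m-1}$ gives
\[
D_{q,x}\mathfrak{B}_{n,q}(x)=\sum_{k=0}^{n-1}\left[\begin{array}{c} n \\ k \end{array}\right]_{q}\frac{(-1,q)_{n-k}}{2^{n-k}}\,[n-k]_{q}\,\mathfrak{b}_{k,q}\,x^{n-k-1}.
\]
I would then use the elementary $q$-binomial identity $\left[\begin{array}{c} n \\ k \end{array}\right]_{q}[n-k]_{q}=[n]_{q}\left[\begin{array}{c} n-1 \\ k \end{array}\right]_{q}$ to pull the factor $[n]_{q}$ in front of the sum.

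The key step, and really the only non-bookkeeping move, is the factorization
\[
(-1,q)_{n-k}=(-1,q)_{n-1-k}\,\bigl(1+q^{n-1-k}\bigr),
\]
which is immediate from $(-1,q)_{m}=\prod_{j=0}^{m-1}(1+q^{j})$. Combined with the obvious $2^{-(n-k)}=\tfrac12\cdot 2^{-(n-1-k)}$, this rewrites the coefficient as $\tfrac{(-1,q)_{n-1-k}}{2^{n-1-k}}\cdot\tfrac{1+q^{n-1-k}}{2}$. Recognizing that $\tfrac{1+q^{n-1-k}}{2}\,x^{n-1-k}=\tfrac{x^{n-1-k}+(qx)^{n-1-k}}{2}$ and comparing the resulting sum with (\ref{be7}) evaluated at $x$ and at $qx$ yields
\[
D_{q,x}\mathfrak{B}_{n,q}(x)=[n]_{q}\,\frac{\mathfrak{B}_{n-1,q}(x)+\mathfrak{B}_{n-1,q}(qx)}{2}.
\]

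The $\mathfrak{E}_{n,q}$ and $\mathfrak{G}_{n,q}$ identities follow from exactly the same computation with $\mathfrak{b}_{k,q}$ replaced by $\mathfrak{e}_{k,q}$ or $\mathfrak{g}_{k,q}$, using (\ref{be7}) and (\ref{be8}) respectively. I do not anticipate any real obstacle here: the entire lemma is essentially reassembly around the factorization of $(-1,q)_{n-k}$, so the only point requiring any care is correctly tracking the shifted indices $n-k$ versus $n-1-k$ in the coefficients.
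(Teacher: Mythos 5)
Your proof is correct and follows exactly the route the paper intends: the paper offers no written proof of Lemma \ref{L:2} beyond the remark that it follows from the explicit representation (\ref{be7}) and the $q$-derivative, and your computation (the identity $\left[\begin{smallmatrix} n \\ k \end{smallmatrix}\right]_{q}[n-k]_{q}=[n]_{q}\left[\begin{smallmatrix} n-1 \\ k \end{smallmatrix}\right]_{q}$ together with the factorization $(-1;q)_{n-k}=(-1;q)_{n-1-k}(1+q^{n-1-k})$ and the splitting of $\tfrac{1+q^{n-1-k}}{2}x^{n-1-k}$ into $\tfrac{x^{n-1-k}+(qx)^{n-1-k}}{2}$) supplies precisely the missing details. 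No gaps; the same argument indeed carries over verbatim to $\mathfrak{E}_{n,q}$ and $\mathfrak{G}_{n,q}$.
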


\begin{lemma}
\label{L:3}\emph{(Difference Equations)} We have%
\begin{align}
\mathfrak{B}_{n,q}\left(  x,1\right)  -\mathfrak{B}_{n,q}\left(  x\right)   &
=\frac{\left(  -1;q\right)  _{n-1}}{2^{n-1}}\left[  n\right]  _{q}%
x^{n-1},\ \ n\geq1,\label{be5}\\
\mathfrak{E}_{n,q}\left(  x,1\right)  +\mathfrak{E}_{n,q}\left(  x\right)   &
=2\frac{\left(  -1;q\right)  _{n}}{2^{n}}x^{n},\ \ \ n\geq0,\label{be6}\\
\mathfrak{G}_{n,q}\left(  x,1\right)  +\mathfrak{G}_{n,q}\left(  x\right)   &
=2\frac{\left(  -1;q\right)  _{n-1}}{2^{n-1}}\left[  n\right]  _{q}%
x^{n-1},\ \ \ n\geq1. \label{b61}%
\end{align}

\end{lemma}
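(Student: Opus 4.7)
The plan is to derive all three identities in parallel by comparing generating functions, so that each difference/sum on the left-hand side telescopes and produces a simple closed form involving $\mathcal{E}_q(tx)$ on the right.

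First I would fix the generating functions for the one-variable specializations. From Definition \ref{D:1} at $y=0$, together with the Cauchy product shown in the proof of Lemma \ref{L:1}, one obtains
\[
\sum_{n=0}^{\infty}\mathfrak{B}_{n,q}(x)\frac{t^{n}}{[n]_{q}!}=\frac{t}{\mathcal{E}_{q}(t)-1}\mathcal{E}_{q}(tx),
\]
and analogously $\frac{2}{\mathcal{E}_q(t)+1}\mathcal{E}_q(tx)$ and $\frac{2t}{\mathcal{E}_q(t)+1}\mathcal{E}_q(tx)$ are the generating functions for $\mathfrak{E}_{n,q}(x)$ and $\mathfrak{G}_{n,q}(x)$ respectively. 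The corresponding two-variable generating functions at $y=1$ come directly from Definitions \ref{D:1}--\ref{D:3} with $\mathcal{E}_q(ty)$ replaced by $\mathcal{E}_q(t)$.

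Next I would take the appropriate linear combination. For the Bernoulli identity subtract the two series, giving
\[
\sum_{n=0}^{\infty}\bigl(\mathfrak{B}_{n,q}(x,1)-\mathfrak{B}_{n,q}(x)\bigr)\frac{t^{n}}{[n]_{q}!}=\frac{t\,\mathcal{E}_{q}(tx)}{\mathcal{E}_{q}(t)-1}\bigl(\mathcal{E}_{q}(t)-1\bigr)=t\,\mathcal{E}_{q}(tx).
\]
For the Euler and Genocchi identities add the two series; the factor $\mathcal{E}_q(t)+1$ cancels the denominator and leaves $2\mathcal{E}_q(tx)$ and $2t\,\mathcal{E}_q(tx)$ respectively. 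All three identities are then reduced to expanding $\mathcal{E}_q(tx)=\sum_{n\geq 0}\frac{(-1;q)_n}{2^n}x^n\frac{t^n}{[n]_q!}$ and, in the Bernoulli and Genocchi cases, shifting the index to absorb the extra factor $t$ using $t^{n+1}/[n]_q!=[n+1]_q\,t^{n+1}/[n+1]_q!$. Comparing coefficients of $t^{n}/[n]_q!$ on both sides then yields (\ref{be5}), (\ref{be6}) and (\ref{b61}) at once.

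There is no real obstacle here; the only thing one must be careful about is the index shift that turns $t\,\mathcal{E}_q(tx)$ into a series starting at $n=1$, which is exactly why the Bernoulli and Genocchi identities are stated for $n\ge 1$ with $(-1;q)_{n-1}/2^{n-1}$ rather than $(-1;q)_n/2^n$, while the Euler identity (without the extra $t$) is valid already for $n\ge 0$. Because each step is a direct comparison of formal power series in $t$, no convergence issues arise beyond those already absorbed in the defining generating functions.
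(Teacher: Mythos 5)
Your proposal is correct and follows essentially the same route as the paper: the paper's proof rewrites $\frac{t\mathcal{E}_{q}(t)}{\mathcal{E}_{q}(t)-1}\mathcal{E}_{q}(tx)=t\mathcal{E}_{q}(tx)+\frac{t}{\mathcal{E}_{q}(t)-1}\mathcal{E}_{q}(tx)$ and compares coefficients, which is exactly your telescoping of the $y=1$ and $y=0$ generating functions. You in fact carry out more of the details (all three cases and the explicit index shift producing $[n]_{q}$) than the paper, which only sketches the Bernoulli case.
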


\begin{proof}
We prove the identity for the $q$-Bernoulli polynomials. From the identity%
\[
\frac{t\mathcal{E}_{q}\left(  t\right)  }{\mathcal{E}_{q}\left(  t\right)
-1}\mathcal{E}_{q}\left(  tx\right)  =t\mathcal{E}_{q}\left(  tx\right)
+\frac{t}{\mathcal{E}_{q}\left(  t\right)  -1}\mathcal{E}_{q}\left(
tx\right)  ,
\]
it follows that%
\[
\sum_{n=0}^{\infty}%
{\displaystyle\sum\limits_{k=0}^{n}}
\left[
\begin{array}
[c]{c}%
n\\
k
\end{array}
\right]  _{q}\frac{(-1,q)_{n-k}}{2^{n-k}}\mathfrak{B}_{k,q}\left(  x\right)
\frac{t^{n}}{\left[  n\right]  _{q}!}=\sum_{n=0}^{\infty}\frac{(-1,q)_{n}%
}{2^{n}}x^{n}\frac{t^{n+1}}{\left[  n\right]  _{q}!}+\sum_{n=0}^{\infty
}\mathfrak{B}_{n,q}\left(  x\right)  \frac{t^{n}}{\left[  n\right]  _{q}!}.
\]

\end{proof}

From (\ref{be5}) and (\ref{be7}), (\ref{be6}) and (\ref{be8}) we obtain the
following formulas.

\begin{lemma}
\label{L:4}We have
\begin{align}
x^{n}  &  =\frac{2^{n}}{\left(  -1;q\right)  _{n}\left[  n\right]  _{q}}%
\sum_{k=0}^{n}\left[
\begin{array}
[c]{c}%
n+1\\
k
\end{array}
\right]  _{q}\frac{\left(  -1;q\right)  _{n+1-k}}{2^{n+1-k}}\mathfrak{B}%
_{k,q}\left(  x\right) \label{be9}\\
x^{n}  &  =\frac{2^{n-1}}{\left(  -1;q\right)  _{n}}\left(  \sum_{k=0}%
^{n}\left[
\begin{array}
[c]{c}%
n\\
k
\end{array}
\right]  _{q}\frac{\left(  -1;q\right)  _{n-k}}{2^{n-k}}\mathfrak{E}%
_{k,q}\left(  x\right)  +\mathfrak{E}_{n,q}\left(  x\right)  \right)  ,\\
x^{n}  &  =\frac{2^{n-1}}{\left(  -1;q\right)  _{n}\left[  n+1\right]  _{q}%
}\left(  \sum_{k=0}^{n+1}\left[
\begin{array}
[c]{c}%
n+1\\
k
\end{array}
\right]  _{q}\frac{\left(  -1;q\right)  _{n+1-k}}{2^{n+1-k}}\mathfrak{G}%
_{k,q}\left(  x\right)  +\mathfrak{G}_{n+1,q}\left(  x\right)  \right)  .
\label{be10}%
\end{align}

\end{lemma}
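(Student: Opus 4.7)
The plan is to derive each formula by combining the addition theorem with $y=1$ (equations (\ref{be3}) and (\ref{be4})) with the difference equations (\ref{be5}), (\ref{be6}) and (\ref{b61}). In each case the difference equation directly isolates a power $x^{n}$ (or $x^{n-1}$), and the addition theorem lets us expand the shifted polynomial back in terms of the unshifted polynomials. After substitution, the highest-order term cancels cleanly, leaving the stated expression for $x^{n}$.

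More concretely, for the $q$-Bernoulli identity I would start from (\ref{be5}) with $n$ replaced by $n+1$, namely
\[
\mathfrak{B}_{n+1,q}(x,1)-\mathfrak{B}_{n+1,q}(x)=\frac{(-1;q)_{n}}{2^{n}}\,[n+1]_{q}\,x^{n},
\]
solve for $x^{n}$, and then substitute the expansion of $\mathfrak{B}_{n+1,q}(x,1)$ given by (\ref{be3}). The $k=n+1$ term in that expansion equals $\mathfrak{B}_{n+1,q}(x)$ and cancels the subtracted copy, so the sum reduces to $k=0,\dots,n$. Rearranging yields (\ref{be9}) up to the normalization factor, which is exactly the form stated.

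The Euler case is even more direct: equation (\ref{be6}) already has $x^{n}$ isolated without a factor of $[n+1]_{q}$, so solving for $x^{n}$ and substituting (\ref{be3}) for $\mathfrak{E}_{n,q}(x,1)$ gives the claim immediately — here there is no cancellation, which is why the formula keeps an extra additive $\mathfrak{E}_{n,q}(x)$ outside the sum. The $q$-Genocchi case then goes exactly parallel to the Bernoulli case: use (\ref{b61}) with $n$ replaced by $n+1$, solve for $x^{n}$, and expand $\mathfrak{G}_{n+1,q}(x,1)$ by (\ref{be4}); since (\ref{b61}) has a plus sign, no cancellation of the top term occurs, and so the sum runs up to $k=n+1$ with the separate $\mathfrak{G}_{n+1,q}(x)$ term surviving, matching (\ref{be10}).

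There is no genuine obstacle in this lemma — every ingredient has been proved in the preceding lemmas and the identities are purely algebraic rearrangements. The only minor care needed is bookkeeping with the index shift $n\mapsto n+1$ in the Bernoulli and Genocchi cases and the constants $(-1;q)_{n}/2^{n}$ versus $(-1;q)_{n-1}/2^{n-1}$, so the substitution is done into the correct formula; this is also the step most likely to harbour a typo, so I would double-check the normalizing prefactor ($[n+1]_{q}$ versus $[n]_{q}$) by evaluating both sides at the small case $n=1$ before declaring the identity proved.
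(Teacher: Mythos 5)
Your approach is exactly the paper's: the authors obtain Lemma \ref{L:4} by combining the difference equations (\ref{be5}), (\ref{be6}), (\ref{b61}) with the $y=1$ expansions (\ref{be3})--(\ref{be4}) (after the shift $n\mapsto n+1$ in the Bernoulli and Genocchi cases), which is precisely your substitution-and-cancellation argument. Your caution about the prefactor is warranted: the derivation, the classical limit (\ref{cl1}), and the check at $n=1$ all give $[n+1]_q$ in the denominator of (\ref{be9}), so the printed $[n]_q$ there is a typo in the statement rather than a defect in your proof.
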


The above formulas are $q$-analoques of the following familiar expansions%
\begin{align}
x^{n}  & =\frac{1}{n+1}%
{\displaystyle\sum\limits_{k=0}^{n}}
\left(
\begin{array}
[c]{c}%
n+1\\
k
\end{array}
\right)  B_{k}\left(  x\right)  ,\ \ \ x^{n}=\frac{1}{2}\left[
{\displaystyle\sum\limits_{k=0}^{n}}
\left(
\begin{array}
[c]{c}%
n\\
k
\end{array}
\right)  E_{k}\left(  x\right)  +E_{n}\left(  x\right)  \right]
,\ \ \ \label{cl1}\\
x^{n}  & =\frac{1}{2\left(  n+1\right)  }\left[
{\displaystyle\sum\limits_{k=0}^{n+1}}
\left(
\begin{array}
[c]{c}%
n+1\\
k
\end{array}
\right)  E_{k}\left(  x\right)  +E_{n+1}\left(  x\right)  \right]  ,\nonumber
\end{align}
respectively.

\begin{lemma}
The following identities hold true.%
\begin{align*}%
{\displaystyle\sum\limits_{k=0}^{n}}
\left[
\begin{array}
[c]{c}%
n\\
k
\end{array}
\right]  _{q}\frac{(-1,q)_{n-k}}{2^{n-k}}\mathfrak{B}_{k,q}\left(  x,y\right)
-\mathfrak{B}_{n,q}\left(  x,y\right)   &  =\left[  n\right]  _{q}\left(
x\oplus_{q}y\right)  ^{n-1},\\%
{\displaystyle\sum\limits_{k=0}^{n}}
\left[
\begin{array}
[c]{c}%
n\\
k
\end{array}
\right]  _{q}\frac{(-1,q)_{n-k}}{2^{n-k}}\mathfrak{E}_{k,q}\left(  x,y\right)
+\mathfrak{E}_{n,q}\left(  x,y\right)   &  =2\left(  x\oplus_{q}y\right)
^{n},\\%
{\displaystyle\sum\limits_{k=0}^{n}}
\left[
\begin{array}
[c]{c}%
n\\
k
\end{array}
\right]  _{q}\frac{(-1,q)_{n-k}}{2^{n-k}}\mathfrak{G}_{k,q}\left(  x,y\right)
+\mathfrak{G}_{n,q}\left(  x,y\right)   &  =2\left[  n\right]  _{q}\left(
x\oplus_{q}y\right)  ^{n-1}.
\end{align*}

\end{lemma}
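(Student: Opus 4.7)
The plan is to mimic the generating-function trick already used in the proof of Lemma \ref{L:3}, but now carry the full two-variable factor $\mathcal{E}_{q}(tx)\mathcal{E}_{q}(ty)$ along instead of just $\mathcal{E}_{q}(tx)$. Concretely, for each of the three families I would multiply the defining generating function by $\mathcal{E}_{q}(t)$, rewrite the left-hand side using the elementary functional identity
\[
\frac{\mathcal{E}_{q}(t)}{\mathcal{E}_{q}(t)\mp 1}=\pm 1+\frac{1}{\mathcal{E}_{q}(t)\mp 1},
\]
and then expand and compare coefficients of $t^{n}/[n]_{q}!$.

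For the $q$-Bernoulli identity I would start from
\[
\frac{t\,\mathcal{E}_{q}(t)}{\mathcal{E}_{q}(t)-1}\mathcal{E}_{q}(tx)\mathcal{E}_{q}(ty)=t\,\mathcal{E}_{q}(tx)\mathcal{E}_{q}(ty)+\frac{t}{\mathcal{E}_{q}(t)-1}\mathcal{E}_{q}(tx)\mathcal{E}_{q}(ty).
\]
The right-hand side, by the product formula $\mathcal{E}_{q}(tx)\mathcal{E}_{q}(ty)=\sum_{n\geq 0}(x\oplus_{q}y)^{n}t^{n}/[n]_{q}!$ and Definition \ref{D:1}, is the generating function whose $t^{n}/[n]_{q}!$-coefficient is $[n]_{q}(x\oplus_{q}y)^{n-1}+\mathfrak{B}_{n,q}(x,y)$ (with the first summand understood to vanish when $n=0$). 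The left-hand side, using $\mathcal{E}_{q}(t)=\sum_{n\geq 0}\tfrac{(-1,q)_{n}}{2^{n}}\,t^{n}/[n]_{q}!$ and again Definition \ref{D:1}, is the Cauchy product
\[
\sum_{n=0}^{\infty}\Bigl(\sum_{k=0}^{n}\left[\begin{array}{c}n\\ k\end{array}\right]_{q}\frac{(-1,q)_{n-k}}{2^{n-k}}\mathfrak{B}_{k,q}(x,y)\Bigr)\frac{t^{n}}{[n]_{q}!}.
\]
Equating coefficients of $t^{n}/[n]_{q}!$ delivers the first identity.

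The Euler case is analogous with the sign flipped: multiplying the defining identity in Definition \ref{D:2} by $\mathcal{E}_{q}(t)$ and using $\mathcal{E}_{q}(t)/(\mathcal{E}_{q}(t)+1)=1-1/(\mathcal{E}_{q}(t)+1)$ shows that the generating function of $\sum_{k}\left[\begin{smallmatrix}n\\ k\end{smallmatrix}\right]_{q}\tfrac{(-1,q)_{n-k}}{2^{n-k}}\mathfrak{E}_{k,q}(x,y)$ equals $2\mathcal{E}_{q}(tx)\mathcal{E}_{q}(ty)$ minus the generating function of $\mathfrak{E}_{n,q}(x,y)$; coefficient comparison gives the second identity. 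The Genocchi case is identical to the Euler case except for an extra factor of $t$ coming from Definition \ref{D:3}, which shifts the index and produces the $[n]_{q}(x\oplus_{q}y)^{n-1}$ on the right.

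There is no real obstacle here: the three proofs are essentially one-line generating-function manipulations once the functional identity for $\mathcal{E}_{q}(t)/(\mathcal{E}_{q}(t)\pm 1)$ is isolated. The only thing to be careful about is the bookkeeping at the boundary $n=0$ (respectively $n=0,1$ for the Bernoulli/Genocchi cases), where the shift produced by the extra factor of $t$ is what forces the $[n]_{q}(x\oplus_{q}y)^{n-1}$ term to vanish for $n=0$; this has to be checked against the conventions but is automatic from the generating-function derivation.
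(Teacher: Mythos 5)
Your proposal is correct and follows exactly the paper's own argument: the authors likewise start from the identity $\frac{t\mathcal{E}_{q}(t)}{\mathcal{E}_{q}(t)-1}\mathcal{E}_{q}(tx)\mathcal{E}_{q}(ty)=t\mathcal{E}_{q}(tx)\mathcal{E}_{q}(ty)+\frac{t}{\mathcal{E}_{q}(t)-1}\mathcal{E}_{q}(tx)\mathcal{E}_{q}(ty)$, expand both sides as series in $t^{n}/[n]_{q}!$ using the product formula for $\mathcal{E}_{q}(tx)\mathcal{E}_{q}(ty)$, and compare coefficients, treating the Euler and Genocchi cases by the analogous sign-flipped decomposition. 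No gaps; the coefficient bookkeeping at $n=0$ works out exactly as you describe.
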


\begin{proof}
We the identity for the $q$-Bernoulli polynomials. From the identity%
\[
\frac{t\mathcal{E}_{q}\left(  t\right)  }{\mathcal{E}_{q}\left(  t\right)
-1}\mathcal{E}_{q}\left(  tx\right)  \mathcal{E}_{q}\left(  ty\right)
=t\mathcal{E}_{q}\left(  tx\right)  \mathcal{E}_{q}\left(  ty\right)
+\frac{t}{\mathcal{E}_{q}\left(  t\right)  -1}\mathcal{E}_{q}\left(
tx\right)  \mathcal{E}_{q}\left(  ty\right)  ,
\]
it follows that%
\[
\sum_{n=0}^{\infty}%
{\displaystyle\sum\limits_{k=0}^{n}}
\left[
\begin{array}
[c]{c}%
n\\
k
\end{array}
\right]  _{q}\frac{(-1,q)_{n-k}}{2^{n-k}}\mathfrak{B}_{k,q}\left(  x,y\right)
\frac{t^{n}}{\left[  n\right]  _{q}!}=\sum_{n=0}^{\infty}%
{\displaystyle\sum\limits_{k=0}^{n}}
\left[
\begin{array}
[c]{c}%
n\\
k
\end{array}
\right]  _{q}\frac{(-1,q)_{k}(-1,q)_{n-k}}{2^{n}}x^{k}y^{n-k}\frac{t^{n+1}%
}{\left[  n\right]  _{q}!}+\sum_{n=0}^{\infty}\mathfrak{B}_{n,q}\left(
x,y\right)  \frac{t^{n}}{\left[  n\right]  _{q}!}.
\]

\end{proof}

\section{Some new formulas}

The classical Cayley transformation $z\rightarrow$Cay$(z,a):=\frac{1+az}%
{1-az}$ motivates us to approaching to the formula for $\mathcal{E}_{q}\left(
qt\right)  $, In addition by substitute it in the generating formula we have:%

\[
\widehat{\mathfrak{B}}_{q}\left(  qt\right)  \widehat{\mathfrak{B}}_{q}\left(
t\right)  =\left(  \widehat{\mathfrak{B}}_{q}\left(  qt\right)  -q\widehat
{\mathfrak{B}}_{q}\left(  t\right)  (1+(1-q)\frac{t}{2})\right)  \frac{1}%
{1-q}\times\frac{2}{\mathcal{E}_{q}\left(  t\right)  +1}%
\]
The right hand side can be presented by improved $q$-Euler numbers .Now the
equating coefficients of $t^{n}$ we get the following identity.In the case
that $n=0$, we find the first improved $q$-Euler number which is exactly 1.

\begin{proposition}
For all $n\geq1$,%

\[%
{\displaystyle\sum\limits_{k=0}^{n}}
\left[
\begin{array}
[c]{c}%
n\\
k
\end{array}
\right]  _{q}\mathfrak{B}_{k,q}\mathfrak{B}_{n-k,q}q^{k}=-q%
{\displaystyle\sum\limits_{k=0}^{n}}
\left[
\begin{array}
[c]{c}%
n\\
k
\end{array}
\right]  _{q}\mathfrak{B}_{k,q}\mathfrak{E}_{n-k,q}[k-1]_{q}-\frac{q}{2}%
{\displaystyle\sum\limits_{k=0}^{n-1}}
\left[
\begin{array}
[c]{c}%
n-1\\
k
\end{array}
\right]  _{q}\mathfrak{B}_{k,q}\mathfrak{E}_{n-k-1,q}[n]_{q}%
\]

\end{proposition}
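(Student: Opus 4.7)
The strategy is to take the functional identity displayed immediately before the proposition at face value, expand both sides as formal $q$-exponential power series, and match coefficients of $t^{n}/[n]_{q}!$. Before using the identity I would first verify it directly. The infinite-product form of $\mathcal{E}_{q}$ yields, by a single index shift in the factors,
$$\mathcal{E}_{q}(qt)=\mathcal{E}_{q}(t)\,\frac{1-(1-q)t/2}{1+(1-q)t/2},$$
equivalently $(1+(1-q)t/2)(\mathcal{E}_{q}(qt)-1)=(\mathcal{E}_{q}(t)-1)-\tfrac{(1-q)t}{2}(\mathcal{E}_{q}(t)+1)$. Substituting this into $\widehat{\mathfrak{B}}(qt)=qt/(\mathcal{E}_{q}(qt)-1)$ and $\widehat{\mathfrak{B}}(t)=t/(\mathcal{E}_{q}(t)-1)$ and clearing denominators reduces the right-hand side of the functional identity to $qt^{2}/\bigl[(\mathcal{E}_{q}(qt)-1)(\mathcal{E}_{q}(t)-1)\bigr]$, which is exactly $\widehat{\mathfrak{B}}(qt)\widehat{\mathfrak{B}}(t)$.

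With the identity secured, the LHS of the proposition is obtained by a direct Cauchy product: since $\widehat{\mathfrak{B}}(qt)=\sum_{n}q^{n}\mathfrak{b}_{n,q}t^{n}/[n]_{q}!$, the coefficient of $t^{n}/[n]_{q}!$ in $\widehat{\mathfrak{B}}(qt)\widehat{\mathfrak{B}}(t)$ is $\sum_{k=0}^{n}\binom{n}{k}_{q}q^{k}\mathfrak{b}_{k,q}\mathfrak{b}_{n-k,q}$. For the RHS I would split the bracket as $\bigl(\widehat{\mathfrak{B}}(qt)-q\widehat{\mathfrak{B}}(t)\bigr)-\tfrac{q(1-q)t}{2}\widehat{\mathfrak{B}}(t)$, recognize $\widehat{\mathfrak{E}}(t)=2/(\mathcal{E}_{q}(t)+1)$, and multiply each piece by $\widehat{\mathfrak{E}}(t)/(1-q)$. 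The arithmetic identity
$$q^{n}-q=-q(1-q)[n-1]_{q}$$
simplifies the first piece to $-q\,\widehat{\mathfrak{E}}(t)\sum_{n}\mathfrak{b}_{n,q}[n-1]_{q}t^{n}/[n]_{q}!$ (the prefactor $1/(1-q)$ cancels), and its Cauchy product delivers the first sum on the right-hand side of the proposition.

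For the second piece, $-\tfrac{qt}{2}\widehat{\mathfrak{B}}(t)\widehat{\mathfrak{E}}(t)$, the shift $t^{n+1}/[n]_{q}!=[n+1]_{q}\,t^{n+1}/[n+1]_{q}!$ combined with a reindexing manufactures the $[n]_{q}$ prefactor, and the Cauchy product produces $-\tfrac{q}{2}[n]_{q}\sum_{k=0}^{n-1}\binom{n-1}{k}_{q}\mathfrak{b}_{k,q}\mathfrak{e}_{n-1-k,q}$ at order $t^{n}/[n]_{q}!$. Adding the two contributions matches the RHS of the proposition. The main obstacle is the verification of the functional identity; once that is in place the rest is careful bookkeeping with Cauchy products and the two normalizations above. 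A sanity check at $n=1$, where both sides reduce to $-(1+q)/2$ using $\mathfrak{b}_{1,q}=\mathfrak{e}_{1,q}=-1/2$, guards against off-by-one slips.
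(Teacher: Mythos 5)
Your proof is correct and follows essentially the same route as the paper: the Cayley-type relation $\mathcal{E}_{q}(qt)=\mathcal{E}_{q}(t)\,\frac{1-(1-q)t/2}{1+(1-q)t/2}$ gives the displayed functional identity for $\widehat{\mathfrak{B}}(qt)\widehat{\mathfrak{B}}(t)$, and the proposition follows by splitting the bracket, recognizing $\widehat{\mathfrak{E}}(t)$, and equating coefficients of $t^{n}/[n]_{q}!$. In fact you supply considerably more detail (the verification of the functional identity, the identity $q^{n}-q=-q(1-q)[n-1]_{q}$, and the $n=1$ check) than the paper, which merely states the identity and asserts the coefficient comparison.
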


Let take a $q$-derivative from generating function, after simplifying the
equation and by knowing the quotient rule for quantum derivative , also using
that
\[
\mathcal{E}_{q}\left(  qt\right)  =\frac{1-(1-q)\frac{t}{2}}{1+(1-q)\frac
{t}{2}}\mathcal{E}_{q}\left(  t\right)  ,D_{q}(\mathcal{E}_{q}\left(
t\right)  )=\frac{\mathcal{E}_{q}\left(  qt\right)  +\mathcal{E}_{q}\left(
t\right)  }{2},
\]
we have:%
\[
\widehat{B}_{q}\left(  qt\right)  \widehat{B}_{q}\left(  t\right)
=\frac{2+(1-q)t}{2\mathcal{E}_{q}\left(  t\right)  (q-1)}\left(  q\widehat
{B}_{q}\left(  t\right)  -\widehat{B}_{q}\left(  qt\right)  \right)
\]

It is clear that $\mathcal{E}_{q}^{-1}\left(  t\right)  =\mathcal{E}%
_{q}\left(  -t\right)  $. Now the equating coefficient of $t^{n}$ we lead to
the following identity.

\begin{proposition}
For all $n\geq1$,%

\[%
{\displaystyle\sum\limits_{k=0}^{2n}}
\left[
\begin{array}
[c]{c}%
2n\\
k
\end{array}
\right]  _{q}\mathfrak{B}_{k,q}\mathfrak{B}_{2n-k,q}q^{k}=-q%
{\displaystyle\sum\limits_{k=0}^{2n}}
\left\{
\begin{array}
[c]{c}%
2n\\
k
\end{array}
\right\}  _{q}\mathfrak{B}_{k,q}[k-1]_{q}(-1)^{k}+\frac{q(1-q)}{2}%
{\displaystyle\sum\limits_{k=0}^{2n-1}}
\left\{
\begin{array}
[c]{c}%
2n-1\\
k
\end{array}
\right\}  _{q}\mathfrak{B}_{k,q}[k-1]_{q}(-1)^{k}%
\]

\[%
{\displaystyle\sum\limits_{k=0}^{2n+1}}
\left[
\begin{array}
[c]{c}%
2n+1\\
k
\end{array}
\right]  _{q}\mathfrak{B}_{k,q}\mathfrak{B}_{2n-k+1,q}q^{k}=q%
{\displaystyle\sum\limits_{k=0}^{2n+1}}
\left\{
\begin{array}
[c]{c}%
2n+1\\
k
\end{array}
\right\}  _{q}\mathfrak{B}_{k,q}[k-1]_{q}(-1)^{k}-\frac{q(1-q)}{2}%
{\displaystyle\sum\limits_{k=0}^{2n}}
\left\{
\begin{array}
[c]{c}%
2n\\
k
\end{array}
\right\}  _{q}\mathfrak{B}_{k,q}[k-1]_{q}(-1)^{k}%
\]

\end{proposition}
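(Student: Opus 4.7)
The plan is to read off $t^{N}$-coefficients from the functional equation
\[
\widehat{\mathfrak{B}}_q(qt)\,\widehat{\mathfrak{B}}_q(t)=\frac{2+(1-q)t}{2(q-1)\,\mathcal{E}_q(t)}\bigl(q\widehat{\mathfrak{B}}_q(t)-\widehat{\mathfrak{B}}_q(qt)\bigr),
\]
which is the identity derived in the paragraph preceding the proposition. The two asserted formulas will emerge as the even part ($N=2n$) and the odd part ($N=2n+1$) of a single coefficient identity, so I would first establish this functional equation: write $\widehat{\mathfrak{B}}_q(qt)=qt/(\mathcal{E}_q(qt)-1)$, use $\mathcal{E}_q(qt)=\frac{1-(1-q)t/2}{1+(1-q)t/2}\mathcal{E}_q(t)$ to relate $\mathcal{E}_q(qt)-1$ to $\mathcal{E}_q(t)-1$, and solve algebraically for the product $\widehat{\mathfrak{B}}_q(qt)\widehat{\mathfrak{B}}_q(t)$.

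The left-hand side is an ordinary Cauchy product,
\[
\widehat{\mathfrak{B}}_q(qt)\,\widehat{\mathfrak{B}}_q(t)=\sum_{N\geq0}\frac{t^{N}}{[N]_q!}\sum_{k=0}^{N}\left[\begin{array}{c}N\\k\end{array}\right]_{q}q^{k}\,\mathfrak{B}_{k,q}\mathfrak{B}_{N-k,q},
\]
so its $t^{2n}/[2n]_q!$ and $t^{2n+1}/[2n+1]_q!$ coefficients already produce the left sides of the two displayed identities. For the right-hand side, I would first simplify
\[
q\widehat{\mathfrak{B}}_q(t)-\widehat{\mathfrak{B}}_q(qt)=\sum_{n\geq0}(q-q^{n})\mathfrak{B}_{n,q}\frac{t^{n}}{[n]_q!}=q(1-q)\sum_{n\geq0}[n-1]_q\,\mathfrak{B}_{n,q}\frac{t^{n}}{[n]_q!}
\]
(interpreting $[-1]_q=-q^{-1}$ so the $n=0$ term is recovered correctly), and then use $1/\mathcal{E}_q(t)=\mathcal{E}_q(-t)=\sum_{m\geq0}(-1)^{m}t^{m}/\{m\}_q!$ together with the distribution of $2+(1-q)t$ to split the right-hand side into two Cauchy products. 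The \emph{main} piece has $t^{N}$-coefficient of length $N+1$ carrying the weight $\left\{\begin{array}{c}N\\k\end{array}\right\}_{q}$, while the \emph{shifted} piece, produced by the extra factor of $t$, has $t^{N}$-coefficient of length $N$ carrying $\left\{\begin{array}{c}N-1\\k\end{array}\right\}_{q}$.

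Finally, equating the $t^{N}$-coefficient on both sides, the sign $(-1)^{N-k}$ inherited from $\mathcal{E}_q(-t)$ specialises to $(-1)^{k}$ when $N=2n$ and to $-(-1)^{k}$ when $N=2n+1$; this parity flip is precisely what produces the opposite signs between the two asserted identities. The constants $q(1-q)/(q-1)=-q$ and $q(1-q)/2$ (with the sign determined by the parity of $N$) fall out as the prefactors in front of the two summands, matching the statement. The main obstacle will be the clean bookkeeping of the $(-1;q)_m/2^m$ factors implicit in $1/\{m\}_q!$ and in the symbol $\left\{\begin{array}{c}N\\k\end{array}\right\}_{q}$, together with the parallel handling of the even/odd split so that a single Cauchy-product computation yields both identities; once the shifted Cauchy product produced by the extra $t$ is organised properly, reading off the two formulas is routine algebra.
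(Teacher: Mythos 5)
Your proposal follows essentially the same route as the paper: the paper arrives at the identical functional equation $\widehat{\mathfrak{B}}_q(qt)\,\widehat{\mathfrak{B}}_q(t)=\frac{2+(1-q)t}{2(q-1)\mathcal{E}_q(t)}\bigl(q\widehat{\mathfrak{B}}_q(t)-\widehat{\mathfrak{B}}_q(qt)\bigr)$ (it gets there via the $q$-derivative and quotient rule rather than your purely algebraic manipulation of the Cayley relation, but the ingredient $\mathcal{E}_q(qt)=\frac{1-(1-q)t/2}{1+(1-q)t/2}\mathcal{E}_q(t)$ is the same), then invokes $\mathcal{E}_q^{-1}(t)=\mathcal{E}_q(-t)$ and equates coefficients of $t^n$, with the even/odd sign split arising exactly as you describe. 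You actually supply more of the coefficient bookkeeping than the paper does; the one point to watch is that the extra factor of $t$ in the shifted piece contributes a factor $[N]_q$ once coefficients are normalized by $[N]_q!$, which your sketch (like the paper's statement) glosses over.
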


We may also derive a differential equation for $\widehat{B}_{q}\left(
t\right)  .$If we differentiate both sides of generating function with respect
to $t$, after a little calculation we find that%

\[
\frac{\partial}{\partial t}\widehat{B}_{q}\left(  t\right)  =\widehat{B}%
_{q}\left(  t\right)  \left(  \frac{1}{t}-\frac{(1-q)\mathcal{E}_{q}\left(
t\right)  }{\mathcal{E}_{q}\left(  t\right)  -1}\left(
{\displaystyle\sum\limits_{k=0}^{\infty}}
\frac{4q^{k}}{4-(1-q)^{2}q^{2k}}\right)  \right)
\]

If we differentiate with respect to $q$, we instead obtain%

\[
\frac{\partial}{\partial q}\widehat{B}_{q}\left(  t\right)  =-\widehat{B}%
_{q}^{2}\left(  t\right)  \mathcal{E}_{q}\left(  t\right)
{\displaystyle\sum\limits_{k=0}^{\infty}}
\frac{4t(kq^{k-1}-(k+1)q^{k})}{4-(1-q)^{2}q^{2k}}%
\]

Again using generating function and combining this with the t derivative we
get the partial differential equation

\begin{proposition}

\[
\frac{\partial}{\partial t}\widehat{B}_{q}\left(  t\right)  -\frac{\partial
}{\partial q}\widehat{B}_{q}\left(  t\right)  =\frac{\widehat{B}_{q}\left(
t\right)  }{t}+\frac{\widehat{B}_{q}^{2}\left(  t\right)  \mathcal{E}%
_{q}\left(  t\right)  }{t}%
{\displaystyle\sum\limits_{k=0}^{\infty}}
\frac{4t(kq^{k-1}-(k+1)q^{k})-q^{k}(1-q)}{4-(1-q)^{2}q^{2k}}%
\]

\end{proposition}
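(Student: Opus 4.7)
The plan is to combine the two partial-derivative formulas already displayed in the passage leading up to the proposition. Both of them are derived by the same mechanism (take the logarithmic derivative of the infinite product $\mathcal{E}_{q}(t)=\prod_{k\ge 0}\frac{1+(1-q)q^{k}t/2}{1-(1-q)q^{k}t/2}$ with respect to $t$ and with respect to $q$, simplify the resulting telescoped fractions, and then apply the quotient rule to $\widehat{B}_{q}(t)=t/(\mathcal{E}_{q}(t)-1)$). Since the two derivative identities are already recorded in the text, the proof of the proposition reduces to algebra: subtract one from the other and exhibit a common factor.

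The key step that makes the right-hand side compact is to reinject the defining relation $\widehat{B}_{q}(t)=t/(\mathcal{E}_{q}(t)-1)$ in the disguised form $1/(\mathcal{E}_{q}(t)-1)=\widehat{B}_{q}(t)/t$ into the $t$-derivative formula. This substitution turns the factor $\frac{(1-q)\mathcal{E}_{q}(t)}{\mathcal{E}_{q}(t)-1}$ into $\frac{(1-q)\mathcal{E}_{q}(t)\widehat{B}_{q}(t)}{t}$, yielding
\[
\partial_{t}\widehat{B}_{q}(t)=\frac{\widehat{B}_{q}(t)}{t}-\frac{\widehat{B}_{q}^{2}(t)\mathcal{E}_{q}(t)}{t}\sum_{k=0}^{\infty}\frac{4(1-q)q^{k}}{4-(1-q)^{2}q^{2k}}.
\]
Now both $\partial_{t}\widehat{B}_{q}(t)$ and $\partial_{q}\widehat{B}_{q}(t)$ (after pulling a factor of $1/t$ out of the latter) share the common prefactor $\widehat{B}_{q}^{2}(t)\mathcal{E}_{q}(t)/t$, so I simply form $\partial_{t}\widehat{B}_{q}(t)-\partial_{q}\widehat{B}_{q}(t)$, merge the two series over the common denominator $4-(1-q)^{2}q^{2k}$, and read off the numerator $4t(kq^{k-1}-(k+1)q^{k})-q^{k}(1-q)$ appearing in the statement. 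The isolated $\widehat{B}_{q}(t)/t$ term survives from the $t$-derivative formula and supplies the first summand on the right.

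The main obstacle, such as it is, is purely bookkeeping: keeping track of the signs of $(1-q)$ and of $-(k+1)q^{k}$, balancing the powers of $t$ before and after extracting the prefactor $1/t$, and checking the constants that come from applying the telescoping identity $\frac{1}{1-u}+\frac{1}{1+u}=\frac{2}{1-u^{2}}$ to the logarithmic derivative of the product for $\mathcal{E}_{q}$. There is no genuine analytic difficulty beyond noticing that $1/(\mathcal{E}_{q}(t)-1)$ is secretly a copy of $\widehat{B}_{q}(t)/t$, which is exactly what forces the quadratic factor $\widehat{B}_{q}^{2}(t)$ to appear uniformly in both derivative expressions and allows them to be packaged as a single series.
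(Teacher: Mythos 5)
Your proposal follows exactly the route the paper itself takes: it quotes the two displayed logarithmic-derivative formulas for $\partial_t\widehat{B}_q(t)$ and $\partial_q\widehat{B}_q(t)$, rewrites $1/(\mathcal{E}_q(t)-1)$ as $\widehat{B}_q(t)/t$ to expose the common prefactor $\widehat{B}_q^{2}(t)\mathcal{E}_q(t)/t$, and then subtracts the two series over the common denominator, which is precisely the paper's (implicit) argument. The one caveat is a factor of $4$: your own intermediate formula carries $4(1-q)q^{k}$ in the numerator of the $t$-derivative series, so the merged numerator should read $4t(kq^{k-1}-(k+1)q^{k})-4q^{k}(1-q)$ rather than $\cdots-q^{k}(1-q)$; that discrepancy is inherited from the paper's statement rather than introduced by you, but as written your last step does not literally follow from your penultimate one.
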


\section{Explicit relationship between the $q$-Bernoulli and $q$-Euler
polynomials}

In this section we will give some explicit relationships between the
$q$-Bernoulli and $q$-Euler polynomials. Here some $q$-analogues of known
results will be given. We also obtain new formulas and their some special
cases below. These formulas are some extensions of the formulas of Srivastava
and Pint\'{e}r, Cheon and others.

We present natural $q$-extensions of the main results in the papers
\cite{pinter} and \cite{luo2}, see Theorems \ref{S-P1} and \ref{S-P2}.

\begin{theorem}
\label{S-P1}For $n\in\mathbb{N}_{0}$, the following relationships hold true:%
\begin{align*}
\mathfrak{B}_{n,q}\left(  x,y\right)   &  =\frac{1}{2}%
{\displaystyle\sum\limits_{k=0}^{n}}
\left[
\begin{array}
[c]{c}%
n\\
k
\end{array}
\right]  _{q}m^{k-n}\left[  \mathfrak{B}_{k,q}\left(  x\right)  +%
{\displaystyle\sum\limits_{j=0}^{k}}
\left\{
\begin{array}
[c]{c}%
k\\
j
\end{array}
\right\}  _{q}\frac{\mathfrak{B}_{j,q}\left(  x\right)  }{m^{k-j}}\right]
\mathfrak{E}_{n-k,q}\left(  my\right) \\
&  =\frac{1}{2}%
{\displaystyle\sum\limits_{k=0}^{n}}
\left[
\begin{array}
[c]{c}%
n\\
k
\end{array}
\right]  _{q}m^{k-n}\left[  \mathfrak{B}_{k,q}\left(  x\right)  +\mathfrak{B}%
_{k,q}\left(  x,\frac{1}{m}\right)  \right]  \mathfrak{E}_{n-k,q}\left(
my\right)  .
\end{align*}

\end{theorem}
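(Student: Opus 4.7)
The plan is to derive the identity by a generating-function manipulation that introduces an auxiliary scale parameter $m$. Starting from the defining generating function of Definition \ref{D:1},
$$\sum_{n=0}^{\infty}\mathfrak{B}_{n,q}(x,y)\frac{t^n}{[n]_q!}=\frac{t\,\mathcal{E}_q(tx)}{\mathcal{E}_q(t)-1}\,\mathcal{E}_q(ty),$$
I would rewrite the last factor as $\mathcal{E}_q(ty)=\mathcal{E}_q\!\bigl(\tfrac{t}{m}\cdot my\bigr)$ so that the $q$-Euler generating kernel, evaluated at rescaled argument $t/m$ with variable $my$, can be forced to appear. The device for this is to multiply and divide by $\tfrac{2}{\mathcal{E}_q(t/m)+1}$, giving
$$\frac{t\,\mathcal{E}_q(tx)}{\mathcal{E}_q(t)-1}\mathcal{E}_q(ty)=\frac{1}{2}\cdot\frac{t\,\mathcal{E}_q(tx)}{\mathcal{E}_q(t)-1}\bigl(\mathcal{E}_q(t/m)+1\bigr)\cdot\frac{2}{\mathcal{E}_q(t/m)+1}\mathcal{E}_q\!\bigl(\tfrac{t}{m}\cdot my\bigr).$$

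Next I would identify the two factors on the right as known generating series. The first factor equals
$$\tfrac{1}{2}\bigl[\widehat{\mathfrak{B}}(t)\mathcal{E}_q(tx)\mathcal{E}_q(t/m)+\widehat{\mathfrak{B}}(t)\mathcal{E}_q(tx)\bigr]=\tfrac{1}{2}\sum_{k=0}^{\infty}\bigl[\mathfrak{B}_{k,q}(x,\tfrac{1}{m})+\mathfrak{B}_{k,q}(x)\bigr]\frac{t^k}{[k]_q!},$$
where I have recognized the generating functions of $\mathfrak{B}_{k,q}(x,1/m)$ (from Definition \ref{D:1} with $y=1/m$) and of $\mathfrak{B}_{k,q}(x)=\mathfrak{B}_{k,q}(x,0)$. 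The second factor is $\sum_{j=0}^{\infty}\mathfrak{E}_{j,q}(my)\,m^{-j}\,\tfrac{t^j}{[j]_q!}$ by Definition \ref{D:2}. Applying the standard $q$-Cauchy product and equating coefficients of $t^n/[n]_q!$ yields the second form,
$$\mathfrak{B}_{n,q}(x,y)=\frac{1}{2}\sum_{k=0}^{n}\left[\begin{array}{c}n\\k\end{array}\right]_q m^{k-n}\bigl[\mathfrak{B}_{k,q}(x)+\mathfrak{B}_{k,q}(x,\tfrac{1}{m})\bigr]\mathfrak{E}_{n-k,q}(my).$$

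To obtain the first form I would expand $\mathfrak{B}_{k,q}(x,1/m)$ via the second addition formula in Lemma \ref{L:1} with $y=1/m$, namely
$$\mathfrak{B}_{k,q}\!\bigl(x,\tfrac{1}{m}\bigr)=\sum_{j=0}^{k}\left[\begin{array}{c}k\\j\end{array}\right]_q\frac{(-1,q)_{k-j}}{2^{k-j}}\,\frac{\mathfrak{B}_{j,q}(x)}{m^{k-j}}=\sum_{j=0}^{k}\left\{\begin{array}{c}k\\j\end{array}\right\}_q\frac{\mathfrak{B}_{j,q}(x)}{m^{k-j}},$$
where in the last step I use the paper's convention $\bigl\{{k\atop j}\bigr\}_q=\bigl[{k\atop j}\bigr]_q(-1,q)_{k-j}/2^{k-j}$ that was implicit in the $q$-Euler recurrence (\ref{euler}). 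Substituting this into the second form produces the first form of the theorem.

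The only non-routine move is the multiplicative trick in the first paragraph: inserting $\mathcal{E}_q(t/m)+1$ over itself to conjure the $q$-Euler kernel at the rescaled variable $t/m$ while simultaneously producing the auxiliary $q$-Bernoulli polynomial $\mathfrak{B}_{k,q}(x,1/m)$. Once this is in place, everything else is a bookkeeping exercise with the $q$-Cauchy product and the addition theorem, with no convergence subtleties inside the common disk $|t|<2\pi$.
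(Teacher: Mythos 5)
Your proposal is correct and follows essentially the same route as the paper: the identical insertion of $\bigl(\mathcal{E}_{q}(t/m)+1\bigr)/2$ times its reciprocal to conjure the $q$-Euler kernel at $t/m$, followed by the $q$-Cauchy product. The only cosmetic difference is that you recognize $\widehat{\mathfrak{B}}(t)\mathcal{E}_{q}(tx)\mathcal{E}_{q}(t/m)$ directly as the generating function of $\mathfrak{B}_{k,q}(x,\tfrac{1}{m})$ and then unpack it via the addition theorem, whereas the paper expands $\mathcal{E}_{q}(t/m)$ as a power series first; both orderings yield the two displayed forms.
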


\begin{proof}
Using the following identity%
\[
\frac{t}{\mathcal{E}_{q}\left(  t\right)  -1}\mathcal{E}_{q}\left(  tx\right)
\mathcal{E}_{q}\left(  ty\right)  =\frac{t}{\mathcal{E}_{q}\left(  t\right)
-1}\mathcal{E}_{q}\left(  tx\right)  \cdot\frac{\mathcal{E}_{q}\left(
\frac{t}{m}\right)  +1}{2}\cdot\frac{2}{\mathcal{E}_{q}\left(  \frac{t}%
{m}\right)  +1}\mathcal{E}_{q}\left(  \frac{t}{m}my\right)
\]
we have%
\begin{align*}%
{\displaystyle\sum\limits_{n=0}^{\infty}}
\mathfrak{B}_{n,q}\left(  x,y\right)  \frac{t^{n}}{\left[  n\right]  _{q}!}
&  =\frac{1}{2}%
{\displaystyle\sum\limits_{n=0}^{\infty}}
\mathfrak{E}_{n,q}\left(  my\right)  \frac{t^{n}}{m^{n}\left[  n\right]
_{q}!}%
{\displaystyle\sum\limits_{n=0}^{\infty}}
\frac{(-1,q)_{n}}{m^{n}2^{n}}\frac{t^{n}}{\left[  n\right]  _{q}!}%
{\displaystyle\sum\limits_{n=0}^{\infty}}
\mathfrak{B}_{n,q}\left(  x\right)  \frac{t^{n}}{\left[  n\right]  _{q}!}\\
&  +\frac{1}{2}%
{\displaystyle\sum\limits_{n=0}^{\infty}}
\mathfrak{E}_{n,q}\left(  my\right)  \frac{t^{n}}{m^{n}\left[  n\right]
_{q}!}%
{\displaystyle\sum\limits_{n=0}^{\infty}}
\mathfrak{B}_{n,q}\left(  x\right)  \frac{t^{n}}{\left[  n\right]  _{q}!}\\
&  =:I_{1}+I_{2}.
\end{align*}
It is clear that%
\[
I_{2}=\frac{1}{2}%
{\displaystyle\sum\limits_{n=0}^{\infty}}
\mathfrak{E}_{n,q}\left(  my\right)  \frac{t^{n}}{m^{n}\left[  n\right]
_{q}!}%
{\displaystyle\sum\limits_{n=0}^{\infty}}
\mathfrak{B}_{n,q}\left(  x\right)  \frac{t^{n}}{\left[  n\right]  _{q}%
!}=\frac{1}{2}%
{\displaystyle\sum\limits_{n=0}^{\infty}}
{\displaystyle\sum\limits_{k=0}^{n}}
\left[
\begin{array}
[c]{c}%
n\\
j
\end{array}
\right]  _{q}m^{k-n}\mathfrak{B}_{k,q}\left(  x\right)  \mathfrak{E}%
_{n-k,q}\left(  my\right)  \frac{t^{n}}{\left[  n\right]  _{q}!}.
\]
On the other hand%
\begin{align*}
I_{1}  &  =\frac{1}{2}%
{\displaystyle\sum\limits_{n=0}^{\infty}}
\mathfrak{E}_{n,q}\left(  my\right)  \frac{t^{n}}{m^{n}\left[  n\right]
_{q}!}%
{\displaystyle\sum\limits_{n=0}^{\infty}}
{\displaystyle\sum\limits_{j=0}^{n}}
\left\{
\begin{array}
[c]{c}%
n\\
j
\end{array}
\right\}  _{q}\mathfrak{B}_{j,q}\left(  x\right)  \frac{t^{n}}{m^{n-j}\left[
n\right]  _{q}!}\\
&  =\frac{1}{2}%
{\displaystyle\sum\limits_{n=0}^{\infty}}
{\displaystyle\sum\limits_{k=0}^{n}}
\left[
\begin{array}
[c]{c}%
n\\
k
\end{array}
\right]  _{q}\mathfrak{E}_{n-k,q}\left(  my\right)
{\displaystyle\sum\limits_{j=0}^{k}}
\left\{
\begin{array}
[c]{c}%
k\\
j
\end{array}
\right\}  _{q}\frac{\mathfrak{B}_{j,q}\left(  x\right)  }{m^{n-k}m^{k-j}}%
\frac{t^{n}}{\left[  n\right]  _{q}!}.
\end{align*}
Therefore%
\[%
{\displaystyle\sum\limits_{n=0}^{\infty}}
\mathfrak{B}_{n,q}\left(  x,y\right)  \frac{t^{n}}{\left[  n\right]  _{q}%
!}=\frac{1}{2}%
{\displaystyle\sum\limits_{n=0}^{\infty}}
{\displaystyle\sum\limits_{k=0}^{n}}
\left[
\begin{array}
[c]{c}%
n\\
k
\end{array}
\right]  _{q}m^{k-n}\left[  \mathfrak{B}_{k,q}\left(  x\right)  +%
{\displaystyle\sum\limits_{j=0}^{k}}
\left\{
\begin{array}
[c]{c}%
k\\
j
\end{array}
\right\}  _{q}\frac{\mathfrak{B}_{j,q}\left(  x\right)  }{m^{k-j}}\right]
\mathfrak{E}_{n-k,q}\left(  my\right)  \frac{t^{n}}{\left[  n\right]  _{q}!}.
\]
It remains to equate coefficient of $t^{n}.$
\end{proof}

Next we discuss some special cases of Theorem \ref{S-P1}.

\begin{corollary}
\label{C:3}For $n\in\mathbb{N}_{0}$ the following relationship holds true.%
\begin{equation}
\mathfrak{B}_{n,q}\left(  x,y\right)  =%
{\displaystyle\sum\limits_{k=0}^{n}}
\left[
\begin{array}
[c]{c}%
n\\
k
\end{array}
\right]  _{q}\left(  \mathfrak{B}_{k,q}\left(  x\right)  +\frac{\left(
-1;q\right)  _{k-1}}{2^{k}}\left[  k\right]  _{q}x^{k-1}\right)
\mathfrak{E}_{n-k,q}\left(  y\right)  . \label{cw1}%
\end{equation}

\end{corollary}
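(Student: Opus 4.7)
The plan is to obtain Corollary \ref{C:3} directly from Theorem \ref{S-P1} by specializing the auxiliary parameter and then eliminating the polynomial $\mathfrak{B}_{k,q}(x,1)$ via the $q$-difference equation in Lemma \ref{L:3}.

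First I would set $m = 1$ in the second form of Theorem \ref{S-P1}. This immediately gives
\[
\mathfrak{B}_{n,q}(x,y) = \frac{1}{2}\sum_{k=0}^{n}\left[\begin{array}{c} n \\ k \end{array}\right]_q \bigl[\mathfrak{B}_{k,q}(x) + \mathfrak{B}_{k,q}(x,1)\bigr]\,\mathfrak{E}_{n-k,q}(y),
\]
so the only remaining task is to rewrite the bracketed expression in a way that isolates $\mathfrak{B}_{k,q}(x)$.

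Next I would invoke the difference equation (\ref{be5}) from Lemma \ref{L:3}, namely
\[
\mathfrak{B}_{k,q}(x,1) - \mathfrak{B}_{k,q}(x) = \frac{(-1;q)_{k-1}}{2^{k-1}}\,[k]_q\,x^{k-1}, \quad k \geq 1,
\]
which lets me substitute $\mathfrak{B}_{k,q}(x,1) = \mathfrak{B}_{k,q}(x) + \tfrac{(-1;q)_{k-1}}{2^{k-1}}[k]_q x^{k-1}$. Adding $\mathfrak{B}_{k,q}(x)$ and dividing by $2$ produces exactly $\mathfrak{B}_{k,q}(x) + \tfrac{(-1;q)_{k-1}}{2^{k}}[k]_q x^{k-1}$, which is the summand appearing in (\ref{cw1}). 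For $k = 0$ the extra term vanishes (the factor $[k]_q$ kills it), so the identity also holds at the lower endpoint and no separate bookkeeping is needed there.

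There is essentially no obstacle: the argument is purely algebraic substitution, and the two ingredients (Theorem \ref{S-P1} with $m=1$ and the difference relation) are already in hand. The only thing to double-check is the convention $(-1;q)_{-1}$ for the $k=0$ term, but since $[0]_q = 0$ the term is zero regardless of how one interprets that factor, so the formula is valid for every $n \in \mathbb{N}_0$.
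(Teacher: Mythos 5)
Your proof is correct and is essentially the paper's intended argument: Corollary \ref{C:3} is presented as the $m=1$ special case of Theorem \ref{S-P1} (second form), and combining the resulting bracket $\mathfrak{B}_{k,q}(x)+\mathfrak{B}_{k,q}(x,1)$ with the difference equation (\ref{be5}) yields exactly the summand in (\ref{cw1}). Your remark that the $k=0$ term is harmless because $[0]_q=0$ is also correct.
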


The formula (\ref{cw1}) ia a $q$-extension of the Cheon's main result
\cite{cheon}.

\begin{theorem}
\label{S-P2} For $n\in\mathbb{N}_{0}$, the following relationships%
\[
\mathfrak{E}_{n,q}\left(  x,y\right)  =\frac{1}{\left[  n+1\right]  _{q}}%
{\displaystyle\sum\limits_{k=0}^{n+1}}
\frac{1}{m^{n+1-k}}\left[
\begin{array}
[c]{c}%
n+1\\
k
\end{array}
\right]  _{q}\left(
{\displaystyle\sum\limits_{j=0}^{k}}
\left\{
\begin{array}
[c]{c}%
k\\
j
\end{array}
\right\}  _{q}\frac{\mathfrak{E}_{j,q}\left(  x\right)  }{m^{k-j}%
}-\mathfrak{E}_{k,q}\left(  y\right)  \right)  \mathfrak{B}_{n+1-k,q}\left(
mx\right)
\]
hold true between the $q$-Bernoulli polynomials and $q$-Euler polynomials.
\end{theorem}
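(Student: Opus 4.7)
The plan is to adapt the generating-function strategy of the proof of Theorem \ref{S-P1}, with the roles of the Bernoulli and Euler factors exchanged. The key starting identity is
\[
\frac{2\mathcal{E}_q(tx)\mathcal{E}_q(ty)}{\mathcal{E}_q(t)+1}=\frac{(t/m)\,\mathcal{E}_q(tx)}{\mathcal{E}_q(t/m)-1}\cdot\frac{\mathcal{E}_q(t/m)-1}{t/m}\cdot\frac{2\mathcal{E}_q(ty)}{\mathcal{E}_q(t)+1},
\]
obtained by inserting $1=\frac{t/m}{\mathcal{E}_q(t/m)-1}\cdot\frac{\mathcal{E}_q(t/m)-1}{t/m}$ between the Euler generating function and $\mathcal{E}_q(tx)$.

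First I would identify the outer factor $\frac{(t/m)\mathcal{E}_q(tx)}{\mathcal{E}_q(t/m)-1}$ as the generating function of $\mathfrak{B}_{N,q}(mx)/m^N$, by rescaling $s=t/m$ in Definition \ref{D:1} and using $\mathcal{E}_q(s\cdot mx)=\mathcal{E}_q(tx)$. Next, I would rewrite the product of the two remaining factors as
\[
\frac{m}{t}\left[\frac{2\mathcal{E}_q(t/m)\mathcal{E}_q(ty)}{\mathcal{E}_q(t)+1}-\frac{2\mathcal{E}_q(ty)}{\mathcal{E}_q(t)+1}\right],
\]
and recognize by Definition \ref{D:2} that the first bracketed term is $\sum_N \mathfrak{E}_{N,q}(1/m,y)\,t^N/[N]_q!$, which by the symmetry of $\mathfrak{E}_{N,q}$ in its two arguments equals $\sum_N \mathfrak{E}_{N,q}(y,1/m)\,t^N/[N]_q!$, while the second is $\sum_N \mathfrak{E}_{N,q}(y)\,t^N/[N]_q!$. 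Since the $N=0$ terms of the two series cancel (both polynomials equal $1$ at the origin), the factor $m/t$ absorbs without producing a pole and shifts the index down by one, producing
\[
\sum_{M\geq 0}\frac{m\bigl[\mathfrak{E}_{M+1,q}(y,1/m)-\mathfrak{E}_{M+1,q}(y)\bigr]}{[M+1]_q!}\,t^M.
\]

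Then I would take the Cauchy product of this series with the Bernoulli series, equate the coefficient of $t^n/[n]_q!$ on both sides, and reindex via $k=n+1-j$. The factor $1/[n+1]_q$ in the target identity emerges from the identity $[M+1]_q!=[M+1]_q\,[M]_q!$, while the inner sum appearing in the statement is identified with $\mathfrak{E}_{k,q}(\cdot,1/m)$ by the addition theorem (Lemma \ref{L:1}) specialized at $1/m$. The main obstacle is pure bookkeeping: tracking the powers of $m$ coming both from the $1/m^N$ in the Bernoulli rescaling and from the $m/t$ in the shifted Euler series, and confirming that the Cauchy-product reindexing yields precisely the $q$-binomial coefficient together with the correct power of $1/m$ and the correct arguments of $\mathfrak{E}_{k,q}$.
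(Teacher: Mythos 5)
Your proposal follows essentially the same route as the paper's own proof: the identical insertion of $1=\frac{\mathcal{E}_q(t/m)-1}{t/m}\cdot\frac{t/m}{\mathcal{E}_q(t/m)-1}$, the identification of the Bernoulli factor at argument $mx$ with scale $1/m$, the recognition of the remaining product as the difference of the series for $\mathfrak{E}_{N,q}(y,1/m)$ (expanded by the addition theorem) and $\mathfrak{E}_{N,q}(y)$, and the index shift through $m/t$ that produces the $1/[n+1]_q$ factor. The argument is correct and no genuinely different idea is introduced.
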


\begin{proof}
The proof is based on the following identity%
\[
\frac{2}{\mathcal{E}_{q}\left(  t\right)  +1}\mathcal{E}_{q}\left(  tx\right)
\mathcal{E}_{q}\left(  ty\right)  =\frac{2}{\mathcal{E}_{q}\left(  t\right)
+1}\mathcal{E}_{q}\left(  ty\right)  \cdot\frac{\mathcal{E}_{q}\left(
\frac{t}{m}\right)  -1}{t}\cdot\frac{t}{\mathcal{E}_{q}\left(  \frac{t}%
{m}\right)  -1}\mathcal{E}_{q}\left(  \frac{t}{m}mx\right)  .
\]
Indeed%
\begin{align*}%
{\displaystyle\sum\limits_{n=0}^{\infty}}
\mathfrak{E}_{n,q}\left(  x,y\right)  \frac{t^{n}}{\left[  n\right]  _{q}!}
&  =%
{\displaystyle\sum\limits_{n=0}^{\infty}}
\mathfrak{E}_{n,q}\left(  y\right)  \frac{t^{n}}{\left[  n\right]  _{q}!}%
{\displaystyle\sum\limits_{n=0}^{\infty}}
\frac{t^{n-1}}{m^{n}\left\{  n\right\}  _{q}!}%
{\displaystyle\sum\limits_{n=0}^{\infty}}
\mathfrak{B}_{n,q}\left(  mx\right)  \frac{t^{n}}{m^{n}\left[  n\right]
_{q}!}\\
&  -%
{\displaystyle\sum\limits_{n=0}^{\infty}}
\mathfrak{E}_{n,q}\left(  y\right)  \frac{t^{n-1}}{\left[  n\right]  _{q}!}%
{\displaystyle\sum\limits_{n=0}^{\infty}}
\mathfrak{B}_{n,q}\left(  mx\right)  \frac{t^{n}}{m^{n}\left[  n\right]
_{q}!}\\
&  =:I_{1}-I_{2}.
\end{align*}
It follows that%
\begin{align*}
I_{2}  &  =\dfrac{1}{t}%
{\displaystyle\sum\limits_{n=0}^{\infty}}
\mathfrak{E}_{n,q}\left(  y\right)  \frac{t^{n}}{\left[  n\right]  _{q}!}%
{\displaystyle\sum\limits_{n=0}^{\infty}}
\mathfrak{B}_{n,q}\left(  mx\right)  \frac{t^{n}}{m^{n}\left[  n\right]
_{q}!}=\dfrac{1}{t}%
{\displaystyle\sum\limits_{n=0}^{\infty}}
{\displaystyle\sum\limits_{k=0}^{n}}
\left[
\begin{array}
[c]{c}%
n\\
k
\end{array}
\right]  _{q}\frac{1}{m^{n-k}}\mathfrak{E}_{k,q}\left(  y\right)
\mathfrak{B}_{n-k,q}\left(  mx\right)  \frac{t^{n}}{\left[  n\right]  _{q}!}\\
&  =%
{\displaystyle\sum\limits_{n=0}^{\infty}}
\frac{1}{\left[  n+1\right]  _{q}}%
{\displaystyle\sum\limits_{k=0}^{n+1}}
\left[
\begin{array}
[c]{c}%
n+1\\
k
\end{array}
\right]  _{q}\frac{1}{m^{n+1-k}}\mathfrak{E}_{k,q}\left(  y\right)
\mathfrak{B}_{n+1-k,q}\left(  mx\right)  \frac{t^{n}}{\left[  n\right]  _{q}%
!},
\end{align*}
and%
\begin{align*}
I_{1}  &  =\dfrac{1}{t}%
{\displaystyle\sum\limits_{n=0}^{\infty}}
\mathfrak{B}_{n,q}\left(  mx\right)  \frac{t^{n}}{m^{n}\left[  n\right]
_{q}!}%
{\displaystyle\sum\limits_{n=0}^{\infty}}
{\displaystyle\sum\limits_{k=0}^{n}}
\left\{
\begin{array}
[c]{c}%
n\\
k
\end{array}
\right\}  _{q}\frac{\mathfrak{E}_{k,q}\left(  y\right)  }{m^{n-k}}\frac{t^{n}%
}{\left[  n\right]  _{q}!}\\
&  =\dfrac{1}{t}%
{\displaystyle\sum\limits_{n=0}^{\infty}}
{\displaystyle\sum\limits_{k=0}^{n}}
\left[
\begin{array}
[c]{c}%
n\\
k
\end{array}
\right]  _{q}\mathfrak{B}_{n-k,q}\left(  mx\right)
{\displaystyle\sum\limits_{j=0}^{k}}
\left\{
\begin{array}
[c]{c}%
k\\
j
\end{array}
\right\}  _{q}\frac{\mathfrak{E}_{j,q}\left(  y\right)  }{m^{n-k}m^{k-j}}%
\frac{t^{n}}{\left[  n\right]  _{q}!}\\
&  =%
{\displaystyle\sum\limits_{n=0}^{\infty}}
{\displaystyle\sum\limits_{k=0}^{n}}
\left[
\begin{array}
[c]{c}%
n\\
j
\end{array}
\right]  _{q}\mathfrak{B}_{n-j,q}\left(  mx\right)
{\displaystyle\sum\limits_{k=0}^{j}}
\left\{
\begin{array}
[c]{c}%
j\\
k
\end{array}
\right\}  _{q}\frac{\mathfrak{E}_{k,q}\left(  x\right)  }{m^{n-k}}%
\frac{t^{n-1}}{\left[  n\right]  _{q}!}\\
&  =%
{\displaystyle\sum\limits_{n=0}^{\infty}}
\frac{1}{\left[  n+1\right]  _{q}}%
{\displaystyle\sum\limits_{j=0}^{n+1}}
\left[
\begin{array}
[c]{c}%
n+1\\
j
\end{array}
\right]  _{q}\mathfrak{B}_{n+1-j,q}\left(  mx\right)
{\displaystyle\sum\limits_{k=0}^{j}}
\left\{
\begin{array}
[c]{c}%
j\\
k
\end{array}
\right\}  _{q}\frac{\mathfrak{E}_{k,q}\left(  x\right)  }{m^{n+1-k}}%
\frac{t^{n}}{\left[  n\right]  _{q}!}.
\end{align*}

\end{proof}

Next we give an interesting relationship between the $q$-Genocchi polynomials
and the $q$-Bernoulli polynomials.

\begin{theorem}
\label{S-P3}For $n\in\mathbb{N}_{0}$, the following relationship%
\begin{align*}
\mathfrak{G}_{n,q}\left(  x,y\right)   &  =\dfrac{1}{\left[  n+1\right]  _{q}}%
{\displaystyle\sum\limits_{k=0}^{n+1}}
\frac{1}{m^{n-k}}\left[
\begin{array}
[c]{c}%
n+1\\
k
\end{array}
\right]  _{q}\left(
{\displaystyle\sum\limits_{j=0}^{k}}
\left[
\begin{array}
[c]{c}%
k\\
j
\end{array}
\right]  _{q}\frac{(-1,q)_{k-j}}{m^{k-j}2^{k-j}}\mathfrak{G}_{j,q}\left(
x\right)  -\mathfrak{G}_{k,q}\left(  x\right)  \right)  \mathfrak{B}%
_{n+1-k,q}\left(  my\right)  ,\\
\mathfrak{B}_{n,q}\left(  x,y\right)   &  =\dfrac{1}{2\left[  n+1\right]
_{q}}%
{\displaystyle\sum\limits_{k=0}^{n+1}}
\frac{1}{m^{n-k}}\left[
\begin{array}
[c]{c}%
n+1\\
k
\end{array}
\right]  _{q}\left(
{\displaystyle\sum\limits_{j=0}^{k}}
\left[
\begin{array}
[c]{c}%
k\\
j
\end{array}
\right]  _{q}\frac{(-1,q)_{k-j}}{m^{k-j}2^{k-j}}\mathfrak{B}_{j,q}\left(
x\right)  +\mathfrak{B}_{k,q}\left(  x\right)  \right)  \mathfrak{G}%
_{n+1-k,q}\left(  my\right)
\end{align*}
holds true between the $q$-Genocchi and the $q$-Bernoulli polynomials.
\end{theorem}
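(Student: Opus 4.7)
The plan is to mimic the factorization trick from the proofs of Theorems \ref{S-P1} and \ref{S-P2}. For each of the two identities I insert a multiplicative ``identity'' of the form $\frac{A(t)}{B(t)}\cdot\frac{B(t)}{A(t)}=1$ into the bivariate generating function, converting the factor $\mathcal{E}_q(ty)$ into the generating function of the complementary polynomial family evaluated at $my$. The leftover middle factor is then split additively into two simpler pieces, producing two Cauchy products whose coefficients reproduce the two summands inside the brackets in the statement.

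Concretely, for the first identity I would begin with the rewriting
\[
\frac{2t}{\mathcal{E}_q(t)+1}\mathcal{E}_q(tx)\mathcal{E}_q(ty)=\frac{2t}{\mathcal{E}_q(t)+1}\mathcal{E}_q(tx)\cdot\frac{m(\mathcal{E}_q(t/m)-1)}{t}\cdot\frac{t/m}{\mathcal{E}_q(t/m)-1}\mathcal{E}_q\!\left(\tfrac{t}{m}\cdot my\right),
\]
whose outer factors are the $q$-Genocchi generating function at $x$ and the $q$-Bernoulli generating function at $my$. Writing the middle factor as $\frac{m\mathcal{E}_q(t/m)}{t}-\frac{m}{t}$ yields a difference $I_1-I_2$ of two Cauchy products. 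Expanding $\mathcal{E}_q(t/m)=\sum_k\frac{(-1,q)_k}{2^k m^k}\frac{t^k}{[k]_q!}$ and convolving it with the Genocchi series at $x$ builds up the inner $j$-sum $\sum_{j=0}^k\left[\!\begin{array}{c}k\\ j\end{array}\!\right]_q\frac{(-1,q)_{k-j}}{2^{k-j}m^{k-j}}\mathfrak{G}_{j,q}(x)$; the subtracted term $-\mathfrak{G}_{k,q}(x)$ comes from $I_2$. The common $1/t$ then shifts the outer index by one, producing the prefactor $1/[n+1]_q$ together with $\left[\!\begin{array}{c}n+1\\ k\end{array}\!\right]_q$, and comparing $t^n$-coefficients gives the claim.

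The second identity is proved by the dual factorization
\[
\frac{t}{\mathcal{E}_q(t)-1}\mathcal{E}_q(tx)\mathcal{E}_q(ty)=\frac{t}{\mathcal{E}_q(t)-1}\mathcal{E}_q(tx)\cdot\frac{m(\mathcal{E}_q(t/m)+1)}{2t}\cdot\frac{2t/m}{\mathcal{E}_q(t/m)+1}\mathcal{E}_q\!\left(\tfrac{t}{m}\cdot my\right),
\]
with the middle factor split as $\frac{m\mathcal{E}_q(t/m)}{2t}+\frac{m}{2t}$. The computation is entirely parallel: the $+$ sign replaces the $-$ sign and the extra $\tfrac12$ accounts for the $\frac{1}{2[n+1]_q}$ prefactor in the conclusion.

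The one delicate step is the index shift induced by the factor $1/t$: a priori it creates a formal $t^{-1}$ contribution, which must vanish in order that the resulting series genuinely start at $n=0$. This is automatic in both cases because $\mathfrak{g}_{0,q}=0$, which kills the $k=0$ term of the Cauchy product in the Genocchi identity and, symmetrically, the $k=n+1$ boundary term (through $\mathfrak{G}_{0,q}(my)=0$) in the Bernoulli identity. Once this check is made, the rest is routine reordering of double sums, fully parallel to the proof of Theorem \ref{S-P2}.
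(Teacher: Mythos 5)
Your proposal matches the paper's proof essentially step for step: the same factorization inserting $\frac{t/m}{\mathcal{E}_q(t/m)-1}\,\mathcal{E}_q\!\left(\frac{t}{m}my\right)$ (and, dually, the Genocchi generating function at $my$), the same additive split of the middle factor into two Cauchy products, and the same $1/t$ index shift producing the $1/[n+1]_q$ prefactor. The paper writes out only the first identity and dismisses the second with ``can be proved in a like manner,'' so your explicit dual factorization and your check that $\mathfrak{g}_{0,q}=0$ (equivalently, that $\mathcal{E}_q(t/m)-1$ vanishes at $t=0$) legitimizes the division by $t$ are consistent with, and slightly more careful than, the paper's own treatment.
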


\begin{proof}
Using the following identity%
\begin{align*}
&  \frac{2t}{\mathcal{E}_{q}\left(  t\right)  +1}\mathcal{E}_{q}\left(
tx\right)  \mathcal{E}_{q}\left(  ty\right)  \\
&  =\frac{2t}{\mathcal{E}_{q}\left(  t\right)  +1}\mathcal{E}_{q}\left(
tx\right)  \cdot\left(  \mathcal{E}_{q}\left(  \frac{t}{m}\right)  -1\right)
\frac{m}{t}\cdot\frac{\frac{t}{m}}{\mathcal{E}_{q}\left(  \frac{t}{m}\right)
-1}\cdot\mathcal{E}_{q}\left(  \frac{t}{m}my\right)
\end{align*}
we have%
\begin{align*}
&
{\displaystyle\sum\limits_{n=0}^{\infty}}
\mathfrak{G}_{n,q}\left(  x,y\right)  \frac{t^{n}}{\left[  n\right]  _{q}!}\\
&  =\frac{m}{t}%
{\displaystyle\sum\limits_{n=0}^{\infty}}
\mathfrak{G}_{n,q}\left(  x,y\right)  \frac{t^{n}}{\left[  n\right]  _{q}!}%
{\displaystyle\sum\limits_{n=0}^{\infty}}
\frac{(-1,q)_{n}}{m^{n}2^{n}}\frac{t^{n}}{\left[  n\right]  _{q}!}%
{\displaystyle\sum\limits_{n=0}^{\infty}}
\mathfrak{B}_{n,q}\left(  my\right)  \frac{t^{n}}{m^{n}\left[  n\right]
_{q}!}\\
&  -\frac{m}{t}%
{\displaystyle\sum\limits_{n=0}^{\infty}}
\mathfrak{G}_{n,q}\left(  x,y\right)  \frac{t^{n}}{\left[  n\right]  _{q}!}%
{\displaystyle\sum\limits_{n=0}^{\infty}}
\mathfrak{B}_{n,q}\left(  my\right)  \frac{t^{n}}{m^{n}\left[  n\right]
_{q}!}\\
&  =\frac{m}{t}%
{\displaystyle\sum\limits_{n=0}^{\infty}}
\left(
{\displaystyle\sum\limits_{k=0}^{n}}
\left[
\begin{array}
[c]{c}%
n\\
k
\end{array}
\right]  _{q}\frac{(-1,q)_{n-k}}{m^{n-k}2^{n-k}}\mathfrak{G}_{k,q}\left(
x\right)  -\mathfrak{G}_{n,q}\left(  x\right)  \right)  \frac{t^{n}}{\left[
n\right]  _{q}!}%
{\displaystyle\sum\limits_{n=0}^{\infty}}
\mathfrak{B}_{n,q}\left(  my\right)  \frac{t^{n}}{m^{n}\left[  n\right]
_{q}!}\\
&  =\frac{m}{t}%
{\displaystyle\sum\limits_{n=0}^{\infty}}
{\displaystyle\sum\limits_{k=0}^{n}}
\frac{1}{m^{n-k}}\left[
\begin{array}
[c]{c}%
n\\
k
\end{array}
\right]  _{q}\left(
{\displaystyle\sum\limits_{j=0}^{k}}
\left[
\begin{array}
[c]{c}%
k\\
j
\end{array}
\right]  _{q}\frac{(-1,q)_{k-j}}{m^{k-j}2^{k-j}}\mathfrak{G}_{j,q}\left(
x\right)  -\mathfrak{G}_{k,q}\left(  x\right)  \right)  \mathfrak{B}%
_{n-k,q}\left(  my\right)  \frac{t^{n}}{\left[  n\right]  _{q}!}\\
&  =%
{\displaystyle\sum\limits_{n=0}^{\infty}}
\dfrac{1}{\left[  n+1\right]  _{q}}%
{\displaystyle\sum\limits_{k=0}^{n+1}}
\frac{1}{m^{n-k}}\left[
\begin{array}
[c]{c}%
n+1\\
k
\end{array}
\right]  _{q}\left(
{\displaystyle\sum\limits_{j=0}^{k}}
\left[
\begin{array}
[c]{c}%
k\\
j
\end{array}
\right]  _{q}\frac{(-1,q)_{k-j}}{m^{k-j}2^{k-j}}\mathfrak{G}_{j,q}\left(
x\right)  -\mathfrak{G}_{k,q}\left(  x\right)  \right)  \mathfrak{B}%
_{n+1-k,q}\left(  my\right)  \frac{t^{n}}{\left[  n\right]  _{q}!}.
\end{align*}
The second identity can be proved in a like manner.
\end{proof}

\bigskip

\end{document}